\documentclass{amsart}


\usepackage{amsmath}
\usepackage{amsfonts}
\usepackage{amssymb}
\usepackage{amsthm}
\usepackage{array}
\usepackage{bbm}
\usepackage{chngpage}
\usepackage{comment}
\usepackage{float}
\usepackage[OT2,T1]{fontenc}
\usepackage{graphicx,caption}
\usepackage[export]{adjustbox}
\usepackage{longtable}
\usepackage{mathrsfs}
\usepackage{mathtools}
\usepackage{wrapfig}
\usepackage{cutwin}
\usepackage{shapepar}
\usepackage{tikz}
\usepackage[lowtilde]{url}
\usepackage{subcaption}
\usetikzlibrary{matrix,arrows}
\usepackage{tabularx}
\usepackage{multirow}
\definecolor{citeclr}{rgb}{0.55, 0.55, 0.64}
\definecolor{linkclr}{rgb}{0, 0.21, 0.9447}
\usepackage[colorlinks,
  linkcolor=linkclr,
  citecolor=citeclr,
  urlcolor=blue!46!cyan,
  ]{hyperref}
\usepackage[nameinlink, nosort]{cleveref}
\usepackage{enumitem}
\usepackage{diagbox}
\usepackage{etoolbox}
\usepackage{algpseudocode}
\usepackage{etoolbox}


\usepackage[margin=0.9in]{geometry}

\patchcmd{\subsection}{-.5em}{.5em}{}{}

\patchcmd{\section}{\normalfont}{\normalfont\Large}{}{}


\newtheorem{theorem}{Theorem}[section]
\newtheorem{lemma}[theorem]{Lemma}

\newtheorem{proposition}[theorem]{Proposition}

\theoremstyle{definition}
\newtheorem{definition}[theorem]{Definition}
\newtheorem{remark}[theorem]{Remark}
\newtheorem{figurecap}[theorem]{Figure}
\crefname{figurecap}{figure}{figures}
\newtheorem{tablecap}[theorem]{Table}
\crefname{tablecap}{table}{tables}

\crefname{idea}{idea}{ideas}

\crefname{observation}{observation}{observations}

\AtBeginEnvironment{example}{%
  \pushQED{\qed}%
}
\AtEndEnvironment{example}{\popQED\endexample}

\setcounter{tocdepth}{1}


\newcommand{\A}{\mathbb{A}}
\newcommand{\CC}{\mathbb{C}}

\newcommand{\Q}{\mathbb{Q}}
\newcommand{\R}{\mathbb{R}}
\newcommand{\Z}{\mathbb{Z}}


\newcommand{\cF}{\mathcal{F}}

\newcommand{\cS}{\mathcal{S}}



\DeclareSymbolFont{cyrletters}{OT2}{wncyr}{m}{n}
\DeclareMathSymbol{\sha}{\mathalpha}{cyrletters}{"58}

\newcommand{\eps}{\varepsilon}

\renewcommand{\Re}{\mathrm{Re}}


\newlength{\strutheight}
\settoheight{\strutheight}{\strut}


\newcommand{\half}{\frac{1}{2}}

\newcommand{\third}{\frac{1}{3}}

\renewcommand{\mod}{\mspace{4mu}\mathrm{mod}\mspace{4mu}}
\newcommand*{\onesymb}{\text{\large\usefont{U}{bbold}{m}{n}1}} 
\newcommand{\one}[1]{\raisebox{-0.33pt}{\onesymb}\mspace{-1.5mu}\{#1\}}
\newcommand{\onelr}[1]{\raisebox{-0.33pt}{\onesymb}\mspace{-4.5mu}\left\{#1\right\}}


\newcommand{\cFQ}{\cF}
\newcommand{\cFY}{\cF_6}
\newcommand{\rhoY}{\rho_6}
\newcommand{\SY}{S^{(6)}}
\newcommand{\m}{n}
\newcommand{\M}{y}
\newcommand{\x}{X}
\newcommand{\q}{p_*}
\newcommand{\QD}{q}
\newcommand{\QM}{m}
\newcommand{\T}{T}
\newcommand{\QT}{\QM_\T}

\author{Alex Cowan}
\address{Department of Mathematics, University of Waterloo, Waterloo, ON, Canada}
\email{alex.cowan@uwaterloo.ca}

\title{Conductor distributions of elliptic curves}
\date{\today}

\AtBeginDocument{%
   \def\MR#1{}
}

\begin{document}
\begin{abstract}
  We determine the distribution of the conductors $N$ of rational elliptic curves when ordered by naive height $H$,
  in the form of an explicit density function for the ratios $N/H$.
  Our work is essentially an effective version of the Brumer--McGuinness--Watkins heuristic.
  
  Applying our results to the problem of enumerating elliptic curves by conductor
  gives the strongest bounds yet for the number of elliptic curves which have conductor much smaller than their height for ranges up to $H \ll N^{1.2165}$.
\end{abstract}
\maketitle
\tableofcontents

\section{Introduction}\label{sec:intro}

Elliptic curves are most naturally ordered by conductor but most easily ordered by height. Converting between these two orderings is an interesting and difficult problem \cite{watkins, SSW, cremona_sadek}.
We study this problem for the following two families $\cFQ$ and $\cFY$ of elliptic curves $E_{a,b}: y^2 = x^3 + ax + b$:
\begin{definition}\label{cF_def}
  For $H > 0$, define
\begin{align*}
  \cFQ(H) \coloneqq \left\{E_{a,b} \,:\, |a| \leqslant H^{\frac{1}{3}},\, |b| \leqslant H^{\half},\, p^4 \mid a \Rightarrow p^6 \nmid b\right\}.
\end{align*}
\end{definition}

\begin{definition}\label{cF_tilde_def}
  For $H > 0$ and $r,t \in \Z$ such that $3\nmid r$ and $2\nmid t$, define
\begin{align*}
  \tilde{\cF}(H) \coloneqq \left\{E_{a,b} \,:\, a = r\mod 6,\, b = t\mod 6,\, |a| \leqslant H^{\frac{1}{3}},\, |b| \leqslant H^{\half},\, p^4 \mid a \Rightarrow p^6 \nmid b\right\}.
\end{align*}
\end{definition}

Every $\Q$-isomorphism class of elliptic curves over $\Q$ has a unique representative in $\cFQ \coloneqq \bigcup_H \cFQ(H)$. As the conductor is an isomorphism invariant, our results for $\cFQ$ apply to the set of all rational elliptic curve isomorphism classes ordered by minimal height among integral short Weierstrass models. The family $\cFY$ appears in \cite{young:elliptic_curves,DHP}, and our results here are needed in \cite{ratiosconjecture}, which relies on \cite{DHP}.

Relationships between an elliptic curve's height $H = H_E$ and its conductor $N = N_E$ are of great interest, but very elusive. Of particular interest is their relative sizes: how much smaller than $H$ can $N$ be? The crux of this question is to do with exceptional cancellation.
The discriminant $\Delta \coloneqq -16(4a^3 + 27b^2)$ being much smaller than the height $H$
reflects interplay between the additive and multiplicative structures of the integers that produces extreme behaviour. The connection between $N$ and $H$ is the adelic version of the matter, with $\Delta$ vs.\ $H$ being the manifestation at the infinite place.

Very little is known about this difficult and fundamental question. It is expected, based on probabilistic heuristics, that significant cancellation happens rarely. For instance, $\#\cF(H) \asymp H^{\frac{5}{6}}$ and it is expected that the number of elliptic curves with conductor at most $H$ is also $\asymp H^{\frac{5}{6}}$ \cite{bm,watkins}. However, the best result currently is that this count is $\ll H^{1 + \eps}$ \cite{duke_kowalski}, i.e.\ even the very coarse question of counting by conductor is far out of reach. Answers to any questions finer than this one regarding the zeroth moment have not been proposed.

Our first result, \cref{thm:conductor_distribution_FN}, gives an explicit description of the distribution of relative sizes of $H$ and $N$ for $100\%$ of the elliptic curves in $\cF$ or $\cFY$. I.e., if one were to pick $E \in \cF(H)$ uniformly at random, what is the distribution (i.e.\ probability measure) of the quantity $N_E/H$? A plot of this distribution, which is supported on the interval $(0, 496]$, 
is shown in \cref{fig:thm1} below. The structure one discovers is quite rich: the distribution converges weakly to a continuous function which is not differentiable at infinitely many points.
  
To state \cref{thm:conductor_distribution_FN}'s formula for this continuous function we must introduce some notation. 

Let $\zeta^{(\m)}(s)$ denote the Riemann zeta function with the Euler product factors at primes dividing $\m$ removed: 
\begin{align}
  \nonumber
  \zeta^{(\m)}(s) \coloneqq \zeta(s) \prod_{p\mid \m} (1 - p^{-s}).
\end{align}

Let $F_\Delta$ denote the following approximation of $\#\{E \in \cF(H) \,:\, \Delta_E < \lambda H\} / \#\cF(H)$ (or idem 
with $\cFQ \mapsto \cFY$):
\begin{definition}\label{FDelta_def}
  \begin{align*}
    F_\Delta(\lambda) \coloneqq \frac{1}{4}\int_{-1}^1\int_{-1}^1 \begin{cases}1 & \text{if $-16(4\alpha^3 + 27\beta^2) < \lambda$} \\ 0 & \text{otherwise}\end{cases} \,d\alpha \,d\beta.
  \end{align*}
\end{definition}

Next, let $\rho(p,\m)$ denote the natural density of $(a,b) \in \Z^2$
such that $E = E_{a,b}$'s discriminant to conductor ratio has $p$-part equal to the $p$-part of $\m$, i.e.:
\begin{definition}\label{rho_def}
  For any prime $p$ and integer $\m$,
  \begin{align*}
    \rho(p,\m) \coloneqq \lim_{H\to\infty} \frac{1}{4H^{\frac{5}{6}}} \#\left\{(a,b) \in \Z^2 \,:\, |a|^3, |b|^2 < H,\,\, 4a^3 + 27b^2 \neq 0,\,\, \gcd\!\left(\frac{\Delta_{E}}{N_{E}}, p^\infty\right) = \gcd(\m, p^\infty)\right\}.
  \end{align*}
\end{definition}
The values of $\rho(p,\m)$ are the simple rational functions of $p$ and $\gcd(\m, p^\infty)$ tabulated below. 
\begin{table}[H]
  \begin{tabular}{|l||l|l|l|}
    \hline
    $\gcd(\m, p^\infty)$ & $p \geqslant 5$ & $p = 2$ & $p = 3$ \rule{0pt}{1.4em}\\
        [1.1ex]
        \hline
        \hline
        $p^0$ & $1 - \frac{1}{p^2}$                                                       & $2^{-1}$ & $8 \cdot 3^{-2}$ \rule{0pt}{1.2em}\\ [0.7ex]
        $p^1$ & $p^{-2}\big(1-\frac{1}{p}\big)$                               & $2^{-2}$ & $2 \cdot 3^{-3}$ \\ [0.7ex]
        $p^2$ & $p^{-3}\big(1-\frac{1}{p}\big)$                               & $2^{-3}$ & $2 \cdot 3^{-4}$ \\ [0.7ex]
        $p^3$ & $p^{-4}\big(1-\frac{1}{p}\big)\big(1 - \frac{1}{p}\big)$ & $0$ & $0$ \\ [0.7ex]
        $p^4$ & $p^{-5}\big(1-\frac{1}{p}\big)\big(2 - \frac{1}{p}\big)$ & $2^{-4}$ & $2 \cdot 3^{-5}$ \\ [0.7ex]
        $p^5$ & $p^{-6}\big(1-\frac{1}{p}\big)\big(2 - \frac{2}{p}\big)$ & $2^{-6}$ & $2 \cdot 3^{-7}$ \\ [0.7ex]
        $p^6$ & $p^{-7}\big(1-\frac{1}{p}\big)\big(3 - \frac{2}{p}\big)$ & $3 \cdot 2^{-7}$ & $10 \cdot 3^{-8}$ \\ [0.7ex]
        $p^7$ & $p^{-8}\big(1-\frac{1}{p}\big)\big(3 - \frac{2}{p}\big)$ & $3 \cdot 2^{-8}$ & $10 \cdot 3^{-9}$ \\ [0.7ex]
        $p^8$ & $p^{-9}\big(1-\frac{1}{p}\big)\big(3 - \frac{2}{p}\big)$ & $3 \cdot 2^{-9}$ & $10 \cdot 3^{-10}$ \\ [0.7ex]
        $p^9$ & $p^{-10}\big(1-\frac{1}{p}\big)\big(2 - \frac{2}{p}\big)$ & $2^{-10}$ & $4 \cdot 3^{-11}$ \\ [0.7ex]
        $p^{10}$ & $p^{-11}\big(1-\frac{1}{p}\big)\big(2 - \frac{2}{p}\big)$ & $2^{-11}$ & $4 \cdot 3^{-12}$ \\ [0.7ex]
        $p^{11}$ & $p^{-12}\big(1-\frac{1}{p}\big)\big(2 - \frac{2}{p}\big)$ & $2^{-12}$ & $4 \cdot 3^{-13}$ \\ [0.7ex]
        $p^{12}$ & $p^{-13}\big(1-\frac{1}{p}\big)\big(2 - \frac{2}{p}\big)$ & $21 \cdot 2^{-13}$ & $148 \cdot 3^{-14}$ \\ [0.7ex]
        $p^k$, $k \geqslant 13$ & $p^{-k-1}\big(1-\frac{1}{p}\big)\big(2 - \frac{2}{p}\big)$ & $5 \cdot 2^{-k-1}$ & $40 \cdot 3^{-k-2}$ \\ [0.9ex]
        \hline
  \end{tabular}
  \begin{tablecap}\label{table:rho_def}
    Values of $\rho(p,\m)$.
  \end{tablecap}
  \vspace{-\baselineskip}
\end{table}  

The function $\rho(p,\m)$ will be used when working with $\cFQ$. The appropriate analogue $\rhoY(p,\m)$ for use with $\cFY$ is the following.

\begin{definition}\label{rho_tilde_def}
  Define
  \begin{align*}
    \rhoY(2,\m) \coloneqq
    \begin{cases}
      \half & \gcd(\m, 2^\infty) = 1\\
      \frac{1}{4} & \gcd(\m, 2^\infty) = \text{$2$ or $4$}\\
      0 & \text{otherwise,}
    \end{cases}
    \quad\quad\quad
    \rhoY(3,\m) \coloneqq
    \begin{cases}
      1 & \gcd(\m, 3^\infty) = 1\\
      0 & \text{otherwise,}
    \end{cases}    
  \end{align*}
  and $\rhoY(p,\m) = \rho(p,\m)$ for all $p \geqslant 5$ and all $\m$. 
\end{definition}

The functions $F_\Delta$ and $\rho$ are the counterparts of one another from an adelic perspective: 
$F_\Delta$ measures size at the infinite place, and
$\rho(p,\cdot)$
at the finite place $p$.

We are now ready to state our main result.

\begin{theorem}[Conductor distribution]
\label{thm:conductor_distribution_FN}
  Let $\cFQ(H)$, $\cFY(H)$, $F_\Delta$, $\rho$, and $\rhoY$ be as in \cref{cF_def,cF_tilde_def,FDelta_def,rho_def,rho_tilde_def}. For any $\lambda_1 > \lambda_0 > \tfrac{4464}{\log H}$,
  \begin{align*}
    &\frac{\#\!\left\{ E \in \cFQ(H) \,:\, \lambda_0 < \frac{N_E}{H} < \lambda_1 \right\}}{\#\cFQ(H)}\\
    &\hspace{1cm}= \frac{\zeta(10)}{\zeta(2)} \sum_{\m = 1}^\infty \big(F_\Delta(\m\lambda_1) - F_\Delta(\m\lambda_0) + F_\Delta(-\m\lambda_0) - F_\Delta(-\m\lambda_1)\big)\prod_p \frac{\rho(p,\m)}{1 - p^{-2}}\\
    &\hspace{1cm}\quad
    + O( (\log H)^{-1+\eps}),
    \shortintertext{and}
    &\frac{\#\!\left\{ E \in \cFY(H) \,:\, \lambda_0 < \frac{N_E}{H} < \lambda_1 \right\}}{\#\cFY(H)}\\
    &\hspace{1cm}= \frac{\zeta^{(6)}(10)}{\zeta^{(6)}(2)} \sum_{\m = 1}^\infty \big(F_\Delta(\m\lambda_1) - F_\Delta(\m\lambda_0) + F_\Delta(-\m\lambda_0) - F_\Delta(-\m\lambda_1)\big)\cdot\rhoY(2,\m)\rhoY(3,\m)\prod_{p \geqslant 5} \frac{\rhoY(p,\m)}{1 - p^{-2}}\\
    &\hspace{1cm}\quad+ O( (\log H)^{-1+\eps}).
  \end{align*}
\end{theorem}

In addition to being of intrinsic interest, \cref{thm:conductor_distribution_FN} is a tool for converting statistical results for height-ordered families of elliptic curves, which are plentiful, into statements about conductors, which are very elusive.
For example,
in \cite{ratiosconjecture}
the family $\cFY$ is shown to exhibit murmurations, a statistical phenomenon which revolves entirely around conductors, by leveraging the results of \cite{DHP}, whose proofs rely critically on the fact that $\cFY$ is height-ordered. \Cref{thm:conductor_distribution_FN} is the key tool that allows \cite{DHP}'s results to be useful in \cite{ratiosconjecture}. 

\Cref{thm:conductor_distribution_FN}'s right hand side is simple to compute: for any $\lambda_1 > \lambda_0 > 0$ the sum over $\m$ is finite, because the summand is $0$ whenever $\m\lambda_0 > 496$, and the product over $p$ is finite, because $\rho(p,\m) = 1 - p^{-2}$ whenever $p \nmid 6n$.

\Cref{fig:thm1} plots \cref{thm:conductor_distribution_FN}'s right hand sides and their derivatives, the aforementioned continuous distributions which fail to be differentiable infinitely often,
for both $\cFQ$ (red) and $\cFY$ (blue).
The functions plotted are essentially the cumulative distribution functions (left) and histograms (right) of the multisets
$\{N_E/H \,:\, E \in \cFQ(H)\}$ and $\{N_E/H \,:\, E \in \cFY(H)\}$ as $H \to \infty$. 

\begin{figure}[H]
\begin{minipage}[t]{0.5\linewidth}
\centering
\includegraphics[width=1.00\linewidth,valign=t,left]
                {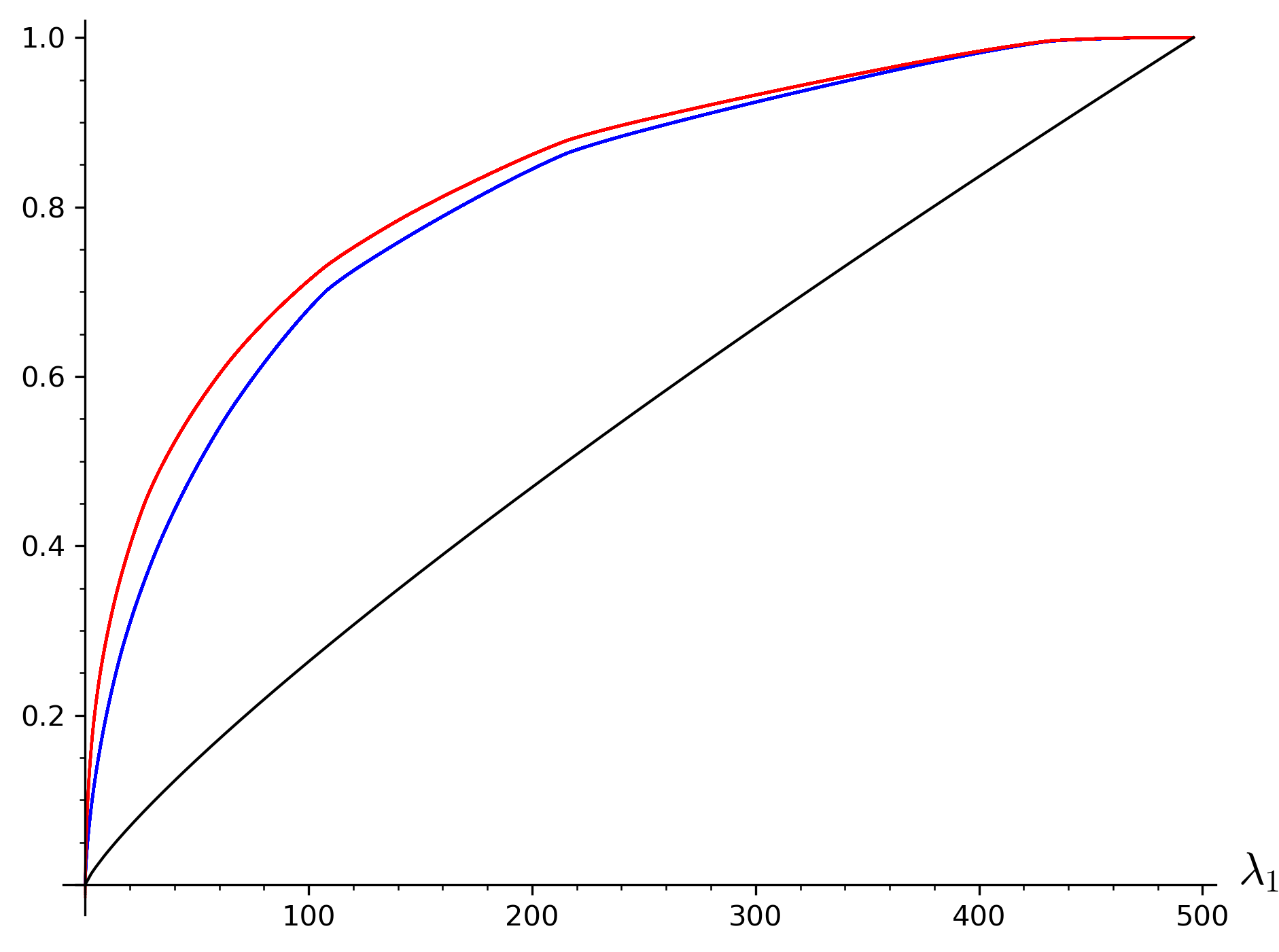}%
\end{minipage}%
\hfill
\begin{minipage}[t]{0.5\linewidth}
  \centering
  \includegraphics[width=1.00\linewidth,valign=t,left]
                  {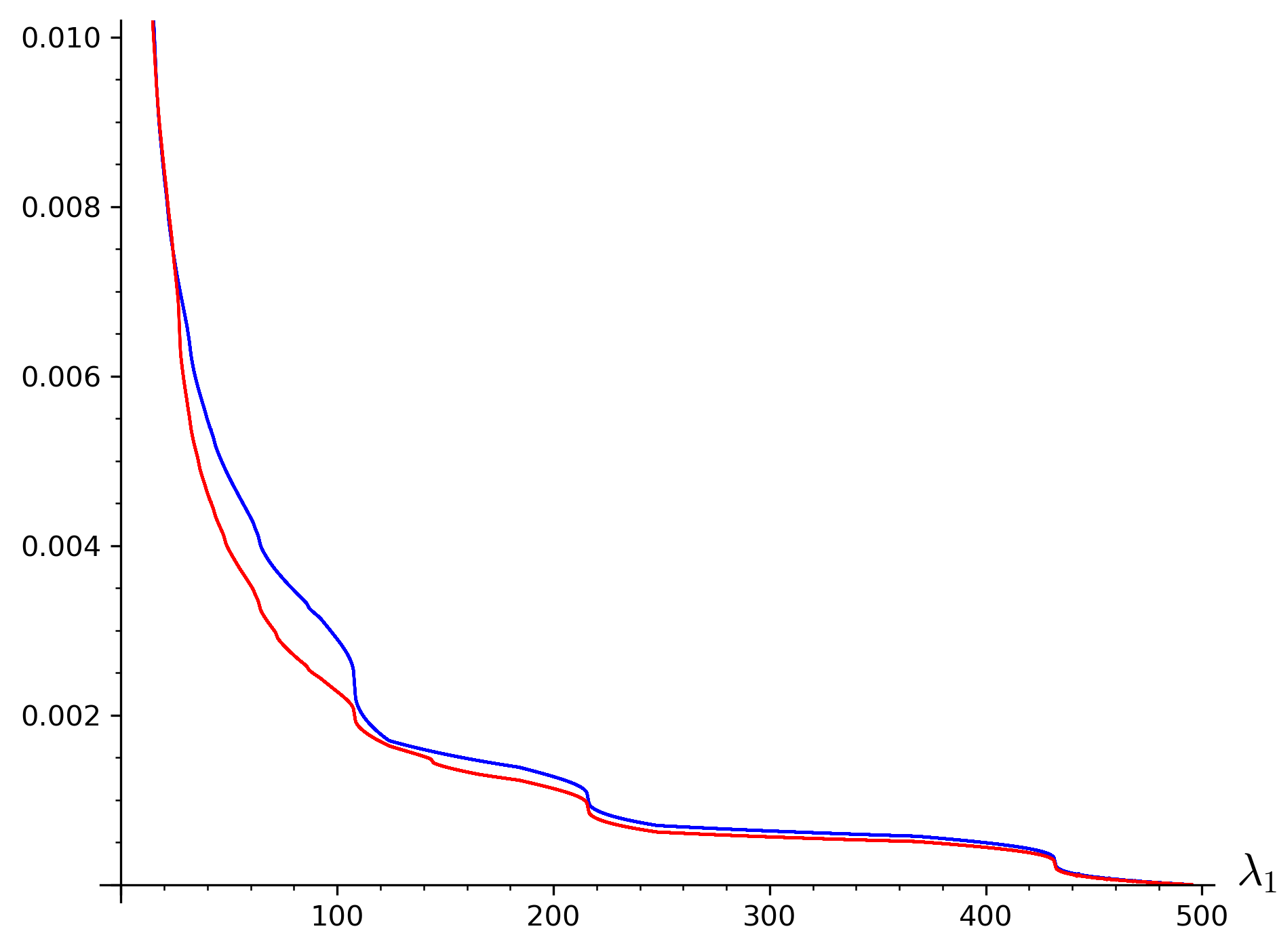}%
\end{minipage}
\begin{figurecap}
  \label{fig:thm1}~
  
  \hspace{0.5cm}Left: Main term on the right hand side of \cref{thm:conductor_distribution_FN} with $\lambda_0 = 0$
  (red: $\cFQ$, blue: $\cFY$), as well as the function
  $(\lambda_1 / 496)^{\frac{5}{6}}$ (black).
  
  \hspace{0.5cm}Right: Derivative with respect to $\lambda_1$ of the main term of \cref{thm:conductor_distribution_FN} (red: $\cFQ$, blue: $\cFY$).
  
  The identity \eqref{eq:fig_identity} was used generate these plots. The code is available at \cite{github_conductors}.
\end{figurecap}
\vspace{-\baselineskip}
\end{figure}

\Cref{thm:conductor_distribution_FN} also has applications to enumerating elliptic curves by conductor. As discussed above, this topic is very interesting and very difficult.
The classical question of counting by conductor has revolved around giving upper bound for the cardinality 
$\#\{E \in \cF(H) \,:\, N_E < \x\}$ as $H \to \infty$ with $\x$ fixed. Based on \cite[\S 4]{watkins} it is commonly believed that $\#\{E \in \cF(\infty) \,:\, N_E < \x\} \sim c\x^{\frac{5}{6}}$ for some $c > 0$ \cite[\S 1]{SSW}. This kind of asymptotic is known for certain large subfamilies of elliptic curves by work of Shankar--Shankar--Wang \cite{SSW}. The current best result for the family of all elliptic curves over $\Q$ is by Duke--Kowalski \cite[Prop.\ 1]{duke_kowalski}: the number of elliptic curves with conductor less than $\x$ is $\ll \x^{1+\eps}$.

\Cref{thm:tail_estimate} bounds the size of the set $\#\{E \in \cF(H) \,:\, N_E < \x\}$ for $H$ much larger than $\x$, but not infinite. I.e., it answers a quantitative version of the cancellation question:
Pick $\delta > 1$.
How many elliptic curves have $N < \x$ and $H < \x^\delta$?
We prove

\begin{theorem}[Counting by conductor]
  \label{thm:tail_estimate}
  \begin{align*}
    \x^{\frac{5}{6}} \ll \#\!\left\{E \in \cF(H) \,:\, N_E < \x\right\} \ll \x^{\frac{5}{6}} \left(\frac{H}{\x}\right)^{\frac{35}{54}}H^{\frac{7}{324}+\eps} + H^\half
    .
  \end{align*}
\end{theorem}

\Cref{thm:tail_estimate} is 
the strongest bound currently known for enumerating elliptic curves over $\Q$ with conductor at most $\x$ and height at most $H$ when
\begin{alignat*}{3}
  &H^{\frac{217}{264}+\eps} \approx H^{0.8220} &&\ll \x \ll \, H^{\frac{53}{60} - \eps} \,\approx H^{0.8833}
  \\
  \Longleftrightarrow\quad
  &\, \x^{\frac{60}{53} + \eps} \,\approx \x^{1.1321} &&\ll H \ll \x^{\frac{264}{217} - \eps} \approx \x^{1.2165}
  .
  \phantom{\quad\Longleftrightarrow}
\end{alignat*}
For $\delta > 264/217$, the aforementioned bound of Duke--Kowalski is better, and for $\delta < 60/53$, bounding by $\#\cF(H)$ is better.
\Cref{thm:tail_estimate} also holds with $\cFQ$ replaced by $\cFY$.

The final result we present establishes 
optimal bounds
for the number of elliptic curves $E \in \cF(H)$ with $N_E \leqslant \frac{4464 H}{\log H}$, the complement of the region 
admissible in \cref{thm:conductor_distribution_FN}.
Put otherwise,
in the range $N \leqslant \frac{4464 H}{\log H}$, \cref{thm:conductor_distribution_FN} cannot tell you the distribution of conductors, but \cref{thm:small_conductors} tells you there are exactly as many curves as you'd expect. 

\begin{theorem}
  \label{thm:small_conductors}
  For any $\lambda > \frac{4464}{\log H}$,
  \begin{align*}
    &\lambda^{\frac{5}{6}} \ll \frac{\#\!\left\{E \in \cF(H) \,:\, N_E < \lambda H\right\}}{\#\cF(H)} \ll \lambda^{\frac{5}{6}},
  \end{align*}
  and idem for $\cFY$.
\end{theorem}

Because \cref{thm:conductor_distribution_FN}'s hypothesis and error term both involve powers of $\log H$, the precise bound of \cref{thm:small_conductors}
will frequently be useful.
For instance, \cref{thm:small_conductors} is critical in establishing the main result of \cite{ratiosconjecture},
the application of our results that we discussed above; in contrast, any bound containing a factor of $H^\eps$ or similar would have had no hope of being helpful.

\section*{Acknowledgements}
We thank Jerry Wang very much for their insightful and meticulous feedback.


\section{Outline}\label{sec:outline}

Our plan for studying the distribution of $N/H$
will be to use the discriminants $\Delta = -16(4a^3 + 27b^2)$ of elliptic curves $E_{a,b} \in \cF(H)$\footnote{This section limits its discussion to the family $\cF$. The family $\cFY$ is handled in the same way.} as a stepping stone,
writing 
\begin{align}
  \label{eq:adelic}
  \frac{N}{H} \,=\, \frac{\Delta}{H} \cdot \frac{N}{\Delta} \,=\, \frac{\Delta}{H} \prod_p \frac{N_p}{\Delta_p}
  ,
\end{align}
where $N_p \coloneqq \gcd(N,p^\infty)$ and similarly for $\Delta_p$.

The decomposition \eqref{eq:adelic} emphasizes an adelic perspective on the ratio $N/H$. It is immediate from the relations $\Delta = -16(4a^3 + 27b^2)$ and $N\mid \Delta$ that $N/H \in (0,496]$,
and
the factor by which $N$ is smaller than $496H$ is a product of the ``archimedean'' factor $\Delta/H$ and the ``non-archimedean'' factors $N_p/\Delta_p$.

From this perspective, the well-known heuristics of Brumer--McGuinness \cite{bm}, relating height to discriminant, and Watkins \cite{watkins}, relating discriminant to conductor, can be viewed as proposing that the factors in the decomposition \eqref{eq:adelic} behave like appropriately chosen independent random variables on each completion of $\Q$.

To elaborate, consider the image of the box $B \coloneqq \{(a,b) \in \Z^2 \,:\, |a|^3,|b|^2 < H\}$ under the diagonal map $\delta: \Z \to \A_\Z$.
The Brumer--McGuinness heuristic 
approximates the image at the infinite place by
\begin{align*}
  \delta(B)_\infty \approx \{(x, y) \in \R^2 \,:\, |x|^3, |y|^2 < H\} \eqqcolon \Omega_\infty
\end{align*}
in the sense of weak convergence of the counting measure. I.e., with $dx\,dy$ denoting the Lebesgue measure on $\R^2$, the 
heuristic is that, for reasonable functions $f$,
\begin{align}
  \label{eq:bm}
  \sum_{(a,b) \in B} f(a,b) \approx \iint_{\Omega_\infty} f(x,y)\,dx\,dy
  .
\end{align}
The classical example is taking $f(x,y) = \one{16|4x^3 + 27y^2| < X}$ so as to estimate of the number of short Weierstrass equations with $|\Delta| < X$, obtaining 
\begin{align}
  \label{eq:bm_eval}
  \#\{(a,b) \in \Z^2 \,:\, 16|4a^3 + 27b^2| < X\}
  \approx
  \frac{1 + \sqrt{3}}{120} \frac{\Gamma\!\left(\half\right) \Gamma\!\left(\frac{1}{3}\right)}{\Gamma\!\left(\frac{5}{6}\right)} X^{\frac{5}{6}}
  .
\end{align}

Watkins's heuristic \cite[\S 4.1]{watkins} reasons similarly for the finite places, and additionally supposes an independence between the local factors of \eqref{eq:adelic}, basically positing that the diagonal map $\delta$ with the infinite place omitted is sufficiently close to being ergodic when acting on $\cS(D) \coloneqq \{(a,b) \in \Z \,:\, 4a^3 + 27b^2 \neq 0,\, |\Delta_{E_{a,b}}| < D\}$ as $D \to \infty$. Let $\mu = \prod_p \mu_p$ denote the Haar measure on $\hat{\Z}^2$.
One can formulate Watkins's heuristic in a way analogous to \eqref{eq:bm}:
\begin{align}
  \label{eq:watkins}
  \frac{1}{\#\cS(D)}\sum_{(a,b) \in \cS(D)} f(a,b) \approx \iint_{\hat{\Z}^2} f(x,y)\,d\mu
  .
\end{align}
Watkins's application takes $f(a,b) = \one{|\Delta_{E_{a,b}}| / N_{E_{a,b}} = \m}$ for each $\m \in \Z_{>0}$,
and then suggests
\begin{align}
  \label{eq:bmw}
  \#\{E/\Q \,:\, N_E < X\} \approx \sum_{\m=1}^\infty \#\{E/\Q \,:\, |\Delta_E| <\m X\} \prod_p \iint_{\Z_p^2} \onelr{\left|\frac{\Delta_{E_{x,y}}}{N_{E_{x,y}}}\right|_p = |\m|_p}d\mu_p
  ,
\end{align}
using the Brumer--McGuinness heuristic \eqref{eq:bm_eval} to estimate $\#\{E/\Q \,:\, |\Delta_E| <\m X\}$. (To be precise, we are presenting an analogue of the Brumer--McGuinness--Watkins pertaining to short Weierstrass models specifically. Some extra steps are needed to handle the family of globally minimal Weierstrass equations; see \cite{bm,watkins}.)

This paper's approach to proving \cref{thm:conductor_distribution_FN} is in essence a rigorous version of the heuristic reasoning leading to \eqref{eq:bmw}. Looking at \eqref{eq:watkins} for $f(a,b)$ of the form $\one{|\Delta_{E_{a,b}}| / N_{E_{a,b}} = \m}$ is tantamount to determining a distribution for the product \eqref{eq:adelic} over the finite places. The inclusion of the factor $\#\{E/\Q \,:\, |\Delta_E| <\m X\}$ in \eqref{eq:bmw} combined with \eqref{eq:bm} recovers the full product \eqref{eq:adelic}.

The assumption of independence, i.e.\ that one can factor the joint distribution over $(a,b) \in B$ of \eqref{eq:adelic}'s local ratios $(\Delta/H, N_2/\Delta_2, N_3/\Delta_3, \dots)$ as a product over the places of $\Q$, is a serious one.
In fact, the crux of our proof is that one specifically cannot factor the joint distribution this way! Indeed, for $|a|^3, |b|^2 < H$, the bound $-496H < \Delta < 64H$ implies that, for all $\lambda > 0$,
\begin{align}
  \label{eq:balance}
  \text{if } N > \lambda H, \text{ then } \frac{|\Delta|}{N} < \frac{496}{\lambda}
  ,
\end{align}
i.e.,
for $E_{a,b} \in \cF(H)$, if $N$ is a sufficiently large fraction of $H$, then $\Delta/N$ cannot be too big.

The significance of \eqref{eq:balance} is, if we go on to specify $\gcd(\Delta/N, C)$ for some $C$ which is divisible by all integers less than $496/\lambda$, then in fact we have specified the value of $|\Delta|/N$ exactly;
the valuations of $\Delta/N$ are intertwined.
Without this technique we would not have been able to say anything at all about the actual distribution of conductors, and would have merely obtained upper bounds instead.
This highlights the distinction between reporting a distribution like \cref{thm:conductor_distribution_FN}'s, vs.\ only bounds or statistics such as the zeroth moment.

The paper's key tool, which 
quantifies the extent to which $\delta(\cF(H))$ equidistributes in $\Omega_\infty \times \hat{\Z}$,
is the following. 

\begin{lemma}
  \label{disc_dist}
  Let $H \in \R_{>0}$, let $\lambda_0 < \lambda_1 \in \R$, let $\QM \in \Z_{>0}$, and let $S_\QM \subseteq (\Z/\QM\Z)^2$.
  Suppose that, for all $(a,b) \in S_\QM$, there does not exist an integer $d$ such that $\gcd(d,\QM) > 1$ and $\gcd(d^4,\QM) \mid \gcd(a,\QM)$ and $\gcd(d^6,\QM) \mid \gcd(b,\QM)$. Then
  \begin{align*}
    &\#\left\{ E_{a,b} \in \cF(H) \,:\, (a\mod \QM,\, b\mod \QM) \in S_\QM, \,\,\lambda_0 < \frac{\Delta_E}{H} < \lambda_1 \right\} \\
    &\hspace{5cm} = \frac{4H^{\frac{5}{6}}}{\zeta^{(\QM)}(10)} \frac{\#S_\QM}{\QM^2} \left(F_\Delta(\lambda_1) - F_\Delta(\lambda_0)\right) + O\!\left(H^\half \frac{\#S_\QM}{\QM} + \#S_\QM\right).
  \end{align*}
\end{lemma}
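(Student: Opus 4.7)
My plan is to Möbius-invert the minimality condition ``$p^4\mid a\Rightarrow p^6\nmid b$'' in the definition of $\cF(H)$, apply \cref{disc_to_int} to each term of the inverted sum, and sum back up to recover the factor $\zeta^{(Q)}(10)^{-1}$. Minimality at every prime is captured by
$$\mathbf{1}[E_{a,b}\text{ minimal}]=\sum_{\substack{d\geq 1\\\mu^2(d)=1}}\mu(d)\,\mathbf{1}[d^4\mid a,\,d^6\mid b],$$
so the target count becomes $\sum_d\mu(d)\,N_d$, where $N_d$ counts pairs $(a,b)$ with $|a|<H^{\frac13}$, $|b|<H^{\frac12}$, $(a\bmod Q,b\bmod Q)\in S_Q$, $d^4\mid a$, $d^6\mid b$, and $\lambda_0<\Delta_E/H<\lambda_1$.

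The hypothesis on $S_Q$ is tailored precisely to kill the $d$ sharing factors with $Q$: if $d^4\mid a$ and $d^6\mid b$ in $\Z$, then $\gcd(d^4,Q)\mid\gcd(a,Q)$ and $\gcd(d^6,Q)\mid\gcd(b,Q)$, which by assumption is impossible when $\gcd(d,Q)>1$. So the sum reduces to squarefree $d$ coprime to $Q$. For each such $d$ I would substitute $a=d^4a'$, $b=d^6b'$. This rescales the box to $|a'|<H'^{\frac13}$, $|b'|<H'^{\frac12}$ with $H'=H/d^{12}$, rewrites the discriminant condition as $\lambda_0<-16(4a'^3+27b'^2)/H'<\lambda_1$, and sends $S_Q$ to the equal-size set $S'_Q=\{((d^4)^{-1}a\bmod Q,(d^6)^{-1}b\bmod Q):(a,b)\in S_Q\}$, which is well-defined because $\gcd(d,Q)=1$ makes $d^4$ and $d^6$ invertible modulo $Q$. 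Applying \cref{disc_to_int} in the primed variables yields
$$N_d=\frac{4H^{\frac56}\#S_Q}{Q^2\,d^{10}}\bigl(F_\Delta(\lambda_1)-F_\Delta(\lambda_0)\bigr)+O\!\left(\frac{H^{\frac12}\#S_Q}{Q\,d^6}+\#S_Q\right).$$

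Summing the main terms with Möbius weights gives $\sum_{\gcd(d,Q)=1}\mu(d)/d^{10}=\prod_{p\nmid Q}(1-p^{-10})=\zeta^{(Q)}(10)^{-1}$, recovering the stated main term. The $H^{\frac12}\#S_Q/(Q\,d^6)$ error aggregates to $O(H^{\frac12}\#S_Q/Q)$ by convergence of $\sum d^{-6}$, and the divisibility constraints force $N_d=0$ once $d>H^{\frac{1}{12}}$, so the $d$-sum is effectively finite.

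The main obstacle will be trimming the per-$d$ error term $\#S_Q$ so that the total fits inside the stated $O(\#S_Q)$ rather than the naive $O(H^{\frac{1}{12}}\#S_Q)$ that comes from summing over $d\leq H^{\frac{1}{12}}$. The ``$+\#S_Q$'' in \cref{disc_to_int} really bounds the contribution of a bounded number of incomplete $Q\times Q$ tiles; for large $d$ the rescaled box itself contains only $O(1)$ many such tiles, so one can replace the perimeter estimate by a direct count $N_d\leq \#S_Q\cdot(\text{tile count})$ and argue more carefully in the transition regime $d\asymp(H/Q^2)^{\frac{1}{12}}$ where the two bounds meet. Reconciling these two regimes cleanly is the principal technical step; the conceptual content of the lemma — that the hypothesis on $S_Q$ cleanly separates the local factor at primes $p\mid Q$ from the factor $\zeta^{(Q)}(10)^{-1}$ at the remaining primes — is already fully encoded in the Möbius inversion above.
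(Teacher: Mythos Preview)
Your approach is exactly the paper's: M\"obius-invert the minimality condition, use the hypothesis on $S_Q$ to discard all $d$ with $\gcd(d,Q)>1$, rescale by $(d^4,d^6)$ to height $H/d^{12}$, apply \cref{disc_to_int} termwise, and sum $\sum_{\gcd(d,Q)=1}\mu(d)d^{-10}=\zeta^{(Q)}(10)^{-1}$. The ``main obstacle'' you flag is not addressed in the paper either: the paper simply sums the per-$d$ error $O(\#S_Q)$ over $d<H^{1/12}$ without comment, which literally yields $O(H^{1/12}\#S_Q)$ rather than $O(\#S_Q)$; in every use of the lemma one has $Q\ll H^{5/12}$, so $H^{1/12}\#S_Q\ll H^{1/2}\#S_Q/Q$ and the discrepancy is absorbed by the other error term---so no delicate tile-counting is actually needed, and your proof is already at least as complete as the paper's.
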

~\\

\Cref{disc_dist} can be viewed as a precise form of the Brumer--McGuinness heuristic which additionally
\begin{itemize}
\item
  pertains specifically to elliptic curves $E_{a,b} \in \cF(H)$, as opposed to all $(a,b) \in \Z$, and
\item
  allows one to incorporate congruence conditions on $a$ and $b$.
\end{itemize}

Congruence conditions allow us to prescribe values for the local factors in \eqref{eq:adelic} at the finite places. For each prime $p$ and local reduction type $\T$, the condition that $E \mod p$ has type $\T$ is given by a congruence modulo some power of $p$, in the sense that there exist $k \in \Z_{\geqslant 0}$ and $S_\T \subseteq (\Z/p^k\Z)^2$ such that $E_{a,b} \mod p$ has type $\T$ if and only if $(a \mod p^k, b \mod p^k) \in S_\T$.
\Cref{sec:local} determines these congruence conditions. For example,
\begin{align}
  \label{eq:example_congs}
  \begin{aligned}
  5 &\;\!\nmid\:\! \Delta/N
  \text{ iff }
  5^2 \nmid 4a^3 + 27b^2 \text{ or } 5 \parallel b,
  \\
  5^8 &\parallel \Delta/N
  \text{ iff }
  5^4 \mid a \text{ and } 5^5 \parallel b,
  \\
  2^2 &\parallel \Delta/N
  \text{ iff }
  b = 1\mod 4 \text{ and } a = \text{$0$ or $3$}\mod 4,
  \\
  3^{1009} &\parallel \Delta/N
  \text{ iff }
  3^3 \parallel a,b \text{ and } 3^{1010} \parallel 4a^3 + 27b^2, \text{ or } 3 \parallel a \text{ and } 3 \nmid b \text{ and } 3^{1011} \parallel 4a^3 + 27b^2.
  \end{aligned}
\end{align}
The congruence conditions on $a$ and $b$ in the examples above can be taken to be restrictions modulo $5^2$, $5^6$, $2^2$, and $3^{1009}$ respectively, but not modulo any smaller powers. The proportions $p^{-2k}\#S_\T$ of congruence classes which satisfy these conditions are $1 - 5^{-2}$, $5^{-9}(1 - 5^{-1})(2 - 2 \cdot 5^{-1})$, $2^{-3}$, and $40 \cdot 3^{-1011}$ respectively. The ratio $\Delta_p / N_p$ depends only on the reduction type of $E \mod p$ (at least for minimal $E$; at $2$ and $3$ we have some non-minimal cases to handle too, with the idea being the same).

With complete data of the sort given in the \eqref{eq:example_congs}, we will ultimately be able to implement Brumer--McGuinness--Watkins's idea leading to \eqref{eq:bmw}.
There are technical hurdles to overcome. We readily observe that prescribing $\gcd(\Delta / N, C) = n$ for large $C$ requires the imposition of a congruence condition modulo a correspondingly large modulus $\QM$.
Quantitatively,
for $\q \geqslant 496/\lambda$ set
\begin{align*}
  \QD \coloneqq \prod_{\phantom{{}_*}p \,<\, \q} p^{\left\lfloor \frac{\log \q}{\log p} \right\rfloor + 2}
  \quad\quad\text{and}\quad\quad
  C \coloneqq \prod_{\phantom{{}_*}p \,<\, \q} p^{\left\lfloor \frac{\log \q}{\log p} \right\rfloor}
  .
\end{align*}
Examining \cref{table:redp,table:red2_all,table:red3_all} shows that specifying $\gcd(\Delta/N, C) = \m$ for any $\m \mid C$ is possible using a congruence modulo $\QD$.

The error term in \cref{disc_dist} grows with the modulus $\QM$ of the congruence condition imposed on $a$ and $b$. This creates a tension: we want $\QM$ small so that the main term of \cref{disc_dist} isn't dwarfed by the error term, yet we want $\QM$ large so that we may effectively estimate $N$ using the idea \eqref{eq:balance} begets. Navigating this tension is the paper's main struggle.

Wtih some technical analysis, we'll determine the distribution of conductors of $\cF(H)$ in what will turn out to be the range $N > 4464H/\log H$.
What remains, then, is to estimate how many conductors of $\cF(H)$ fall outside this range.

We proceed by splitting the complementary range $N < 4464H/\log H$ into two parts, according to whether or not $N > H^{\frac{5}{6}+\eps}$. We handle the larger conductors in \cref{FN_bigm} and the smaller ones in \cref{N_56_bound}. \Cref{FN_bigm} may be interesting in its own right, and its proof features a trick which, in the context of proving \cref{thm:conductor_distribution_FN}, amounts to swapping one set of local conditions used in \cref{disc_dist} for another at an opportune moment.

At this point \cref{thm:conductor_distribution_FN} will have been established.
With it then available as a tool, the bulk of the work will be behind us.

\Cref{thm:small_conductors} will be proved next. The proof leverages \cref{thm:conductor_distribution_FN} heavily. Essentially, the idea is that \cref{thm:conductor_distribution_FN} describes the complementary range completely, and it will be enough to simply look at what mass that description gives to its complementary range, i.e.\ the range of \cref{thm:small_conductors}. This mass is calculated in \cref{FN_small_bound} to be the expected amount, and is indeed larger than \cref{thm:conductor_distribution_FN}'s error term. We find it interesting that it is not at all clear from \cref{fig:thm1} that the expected upper bound $\lambda^{\frac{5}{6}}$ holds.

The proof of \cref{thm:tail_estimate} will involve adaptations and variations of ideas which will have at that point been encountered elsewhere, as well as some straightforward optimization to balance error terms.


\section{Discriminant distributions with local restrictions}\label{sec:disc}

The main result of this section is \cref{disc_dist}, 
whose significance is discussed in \cref{sec:outline}.
We begin with a version pertaining to all integral short Weierstrass equations, as opposed to only those in $\cF(H)$.

\begin{lemma}\label{disc_to_int}
  Let $H \in \R_{>0}$, let $\lambda_0 < \lambda_1 \in \R$, let $\QM \in \Z_{>0}$, and let $S_\QM \subseteq (\Z/\QM\Z)^2$.
  \begin{align*}
    &\#\left\{|a| < H^\third, |b| < H^\half \,:\, (a\mod \QM, b\mod \QM) \in S_\QM, \,\,\lambda_0 < -16\frac{4a^3 + 27b^2}{H} < \lambda_1 \right\} \\
    &\hspace{7cm}= 4H^{\frac{5}{6}}\frac{\#S_\QM}{\QM^2}\left(F_\Delta(\lambda_1) - F_\Delta(\lambda_0)\right) + O\!\left(H^\half\frac{\#S_\QM}{\QM} + \#S_\QM\right).
  \end{align*}
\end{lemma}
\begin{proof}
  Tile $\left\{ |a| < H^\third, |b| < H^\half \right\}$ with $\QM \times \QM$ squares. In each square, $\#\{(a\mod \QM, b\mod \QM) \in S_\QM\} = \#S_\QM$. The curves $$-16\frac{4a^3 + 27b^2}{H} \in \{\lambda_0, \lambda_1\}$$ intersect $\ll \QM^{-1}H^\half + \QM^{-1}H^\third + 1$ squares. This is also the number of incomplete squares around the perimeter. Each of these squares contributes a discrepancy between the main terms on the left and right hand sides of the lemma of size at most $2\#S_\QM$.
\end{proof}

Incorporating the restriction $p^4 \mid a \Rightarrow p^6 \nmid b$ from the definition of $\cF(H)$ into \cref{disc_to_int} leads to \cref{disc_dist}, which we now prove.

\begin{proof}[Proof of \cref{disc_dist}]
  \cite[Proof of lemma 3.2]{young:elliptic_curves} gives the following variant of M\"{o}bius inversion:
  \begin{align*}
    \sum_{\substack{d \in \Z_{>0} \\ d^4 \mid a \text{ and } d^6 \mid b}} \mu(d) = \begin{cases}1 & \text{if there does not exist a prime $p$ such that $p^4 \mid a$ and $p^6 \mid b$} \\ 0 & \text{otherwise.}\end{cases}
  \end{align*}

  This identity implies that, for any $S \subseteq \Z^2$ and any function $f: S \to \CC$,
  \begin{align}
    \sum_{\substack{(a,b) \in S \\ p^4 \mid a \,\Rightarrow\, p^6 \nmid b}} f(a,b)
    &= \sum_{(a,b) \in S} f(a,b) \sum_{\substack{d \in \Z_{>0} \\ d^4 \mid a \text{ and } d^6 \mid b}} \mu(d) \nonumber\\
    &= \sum_{d \in \Z_{>0}} \mu(d) \sum_{\substack{(\alpha, \beta) \in \Z^2 \\ (\alpha d^4, \beta d^6) \in S}} f(\alpha d^4, \beta d^6). \label{young_moebius}
  \end{align}

  Let
  $$S_\QM^{(d)} \coloneqq \{ (a,b) \in (\Z/\QM\Z)^2 \,:\, (ad^4, bd^6) \in S_\QM \}.$$
  Note that
  \begin{align}
    \text{$\#S_\QM^{(d)} = \#S_\QM$ when $\gcd(d,\QM) = 1$, and $\#S_\QM^{(d)} = 0$ when $\gcd(d,\QM) > 1$.} \label{SQd_observation}
  \end{align}
  The second observation of \eqref{SQd_observation} follows from \cref{disc_dist}'s assumptions on $S_\QM$.
  
  Taking
  $$S = \{ (a,b) \in \Z^2 \,:\, E_{a,b} \in \cF(H),\,\, (a\mod \QM,\, b\mod \QM) \in S_\QM\}$$
  in \eqref{young_moebius} yields
  \begin{align}
    \sum_{\substack{|a| \,<\, H^\third \\ |b| \,<\, H^\half \\ (a\mod \QM,\, b\mod \QM) \,\in\, S_\QM \\ p^4\mid a \,\Rightarrow\, p^6\nmid b}} f(a,b) = \sum_{\substack{0 \,<\, d \,<\, H^{\frac{1}{12}} \\ \gcd(d,\QM) \,=\, 1}} \mu(d) \sum_{\substack{|a| \,<\, (d^{-12}H)^\third \\ |b| \,<\, (d^{-12}H)^\half \\ (a\mod \QM,\, b\mod \QM) \,\in\, S_\QM^{(d)} }} f(ad^4, bd^6), \label{moebius_cF}
  \end{align}
  where the condition $\gcd(d,\QM) = 1$ could be inserted freely as a result of \eqref{SQd_observation}.

  \Cref{disc_dist} follows from applying \eqref{moebius_cF} to the function
  \begin{align*}
    f(a,b) = \begin{cases}1 & \text{if } \lambda_0 < -16\frac{4a^3 + 27b^2}{H} < \lambda_1 \\ 0 & \text{otherwise,}\end{cases}
  \end{align*}
  combining with \cref{disc_to_int}, and using \eqref{SQd_observation}.
\end{proof}

\begin{remark}
  For arbitrary $S \subseteq (\Z/\QM\Z)^2$ and $d \in \Z_{>0}$ an elementary calculation shows that
  \begin{align*}
    &\#\{(a,b) \in (\Z/\QM\Z)^2 \,:\, (a d^4, b d^6) \in S\}\\
    &\hspace{2cm}= \gcd(d^4,\QM) \gcd(d^6,\QM) \cdot \#\{ (\alpha, \beta) \in S \,:\, \text{$\gcd(d^4,\QM) \mid \gcd(\alpha,\QM)$ and $\gcd(d^6,\QM) \mid \gcd(\beta,\QM)$}\}.
  \end{align*}
\end{remark}

\begin{remark}
  Let $r,t$ be as in \cref{cF_tilde_def}. If $6\mid \QM$ and $a = r \ \mathrm{mod}\ 6$ and $b = t \ \mathrm{mod}\ 6$ for all $(a,b) \in S_\QM$, then \cref{disc_dist} holds with $\cF$ replaced by $\cFY$.
\end{remark}

The proof of \cref{disc_dist} with $f(a,b) = 1$, combined with \cref{disc_to_int} for $\lambda_0 = -\infty$, $\lambda_1 = \infty$, and either $\QM = 1$, or $\QM = 6$ and $S_\QM = \{(a,b) \,:\, a = r \ \mathrm{mod}\ 6,\,\, b = t \ \mathrm{mod}\ 6\}$, yields
\begin{lemma}\label{FH_size}
  \begin{align*}
    \#\cF(H) = \frac{4H^{\frac{5}{6}}}{\zeta(10)} + O(H^\half) \quad\quad\text{and}\quad\quad \#\cFY(H) = \frac{H^{\frac{5}{6}}}{9\zeta^{(6)}(10)} + O(H^\half).
  \end{align*}
\end{lemma}


\section{Reduction types}\label{sec:local}

\Cref{disc_dist} takes as input the modulus $\QM$ of a congruence condition, and a \textit{proportion} $\# S_\QM / \QM^2$ reflecting how often the associated congruence condition is satisfied. The lemma's error term is such that it's desirable for $\QM$ to be as small as possible. In this section we determine the minimal moduli and the proportions for the congruence conditions associated to reduction types at each prime $p$.

We handle the cases $p = 2$, $p = 3$, and $p \geqslant 5$ separately.
\Cref{table:redp} lists the reduction information at $p \geqslant 5$ for both $\cFQ$ and $\cFY$. 
\Cref{table:red2_all,table:red3_all} pertain specifically to the family $\cF$, at the primes $p = 2$ and $p = 3$ respectively. \Cref{fig:red23} illustrates the information contained in these two tables. \Cref{local_densities_3,local_densities_2} describe reduction at $p = 3$ and $p = 2$ for the family $\cFY$.

The \cref{table:redp,table:red2_all,table:red3_all} list all possible reduction types among $E_{a,b}$ in $\cF$. 
For each reduction type $\T$, the tables also list
\begin{itemize}
\item exhaustive criteria on the valuations $v_p(a)$, $v_p(b)$, and $v_p(\Delta)$ for $E_{a,b}$ with reduction type $\T$ (where $v_p(x)$ is defined by $p^{v_p(x)} \!\parallel\! x$),
\item the largest power $v_p(N)$ of $p$ dividing the conductor of $E$ with reduction type $\T$,
\item the least integer $\QT$ such that there exists $S_\T \subseteq (\Z/\QT\Z)^2$ with the property that $(a \mod \QT, b \mod \QT) \in S_T$ if and only if $E_{a,b}$ has reduction type $\T$ at $p$, and
\item the proportion $\QT^{-2} \#S_\T$ of residue classes for which $E_{a,b}$ has reduction type $\T$ at $p$.
\end{itemize}



The condition $p^4\nmid a$ or $p^6\nmid b$ in the definition of $\cFQ$ ensures that $E_{a,b} \in \cFQ$ is minimal at every prime $p \geqslant 5$. It is not true that $E_{a,b}$ is necessarily minimal at $p = 2$ or $3$. It follows from \cite[Tableaux II et IV]{papadopoulos} that if $E_{a,b}$ is not minimal at $p = 2$ or $3$, then $E_{a,b}$ has either good or multiplicative reduction, and the $p$-part of the minimal discriminant of $E_{a,b}$ is equal to the $p$-part of $p^{-12}\Delta_E$. 

In \cref{table:red2_all,table:red3_all} we write e.g.\
``$3^{12}\mathrm{I}_0$'' to denote type $\mathrm{I}_0$ reduction with the additional condition that the discriminant of $E_{a,b}$ is $3^{12}$ times its minimal discriminant. All cases which do not feature ``$2^{12}$'' or ``$3^{12}$'' in the notation are minimal.
We also write e.g.\ 
``$\mathrm{II}(5)$'' to denote type $\mathrm{II}$ reduction with the additional condition that $v_p(N) = 5$.


The case $p^4 \!\mid\! a \;\text{and}\; p^6 \!\mid\! b$ is also included in \cref{table:redp,table:red2_all,table:red3_all}, even though the short Weierstrass models $E_{a,b} \in \cF(H)$ 
never fall into this case by definition.

\begin{table}[H]
\begin{tabular}{|l|r|r|r||c|c|l|}
  \multicolumn{1}{c}{} & \multicolumn{3}{c}{Conditions} & \multicolumn{3}{c}{} \\
  \hline
  Type & $v_p(a)$ & $v_p(b)$ & $v_p(\Delta)$ & $v_p(N)$ & $\QT$ & Proportion \rule{0pt}{1em}\\
  [0.2ex]
  \hline
  $\mathrm{I}_0$            & $\geqslant 0$ & $\geqslant 0$  & $0$ & $0$ & $p$ & $1 - p^{-1}$ \rule{0pt}{1em} \\ [0.2ex]
  $\mathrm{I}_\m,\; \m\geqslant 1$ & $0$ & $0$ & $\m$ & $1$   & $p^{\m+1}$ & $p^{-\m}(1 - p^{-1})^2$   \\ [0.2ex]
  $\mathrm{II}$             & $\geqslant 1$ & $1$ & $2$ & $2$   & $p^2$    & $p^{-2}(1 - p^{-1})$     \\ [0.2ex]
  $\mathrm{III}$            & $1$ & $\geqslant 2$ & $3$ & $2$   & $p^2$    & $p^{-3}(1 - p^{-1})$     \\ [0.2ex]
  $\mathrm{IV}$             & $\geqslant 2$ & $2$ & $4$ & $2$   & $p^3$    & $p^{-4}(1 - p^{-1})$     \\ [0.2ex]
  $\mathrm{I}_0^*$          &  &  & $6$ & $2$   & $p^4$    & $p^{-5}(1 - p^{-1})$      \\ [0.2ex]
  $\mathrm{I}_\m^*,\; \m\geqslant 1$ & $2$ & $3$ & $\m+6$ & $2$ & $p^{\m+4}$ & $p^{-\m-5}(1 - p^{-1})^2$  \\ [0.2ex]
  $\mathrm{IV}^*$            & $\geqslant 3$ & $4$ & $8$ & $2$   & $p^5$    & $p^{-7}(1 - p^{-1})$     \\ [0.2ex]
  $\mathrm{III}^*$           & $3$ & $\geqslant 5$ & $9$ & $2$   & $p^5$    & $p^{-8}(1 - p^{-1})$     \\ [0.2ex]
  $\mathrm{II}^*$            & $\geqslant 4$ & $5$ & $10$ & $2$  & $p^6$    & $p^{-9}(1 - p^{-1})$     \\ [0.2ex]
  $p^4 \!\mid\! a \;\text{and}\; p^6 \!\mid\! b$ & $\geqslant 4$ & $\geqslant 6$ & $\geqslant 12$ & --- & $p^6$ & $p^{-10}$  \\ [0.2ex]
  \hline
\end{tabular}
\begin{tablecap}\label{table:redp}
  Reduction information at $p \geqslant 5$ for the families $\cF$ and $\cFY$.
\end{tablecap}
\end{table}


\begin{table}[H]
\begin{tabular}{|l|r|r|r|c||c|c|l|}
  \multicolumn{1}{c}{} & \multicolumn{4}{c}{Conditions} & \multicolumn{3}{c}{} \\
  \hline
  Type & $v_2(a)$ & $v_2(b)$ & $v_2(\Delta)$ & Extra & $v_2(N)$ & $\QT$ & Proportion \rule{0pt}{1em}\\
  [0.2ex]
  \hline
  $\mathrm{II}(4)$ & $\geqslant 0$ & $0$ & $4$ & $\neg$\eqref{eq:p2_extra_condition_1}$\phantom{\neg}$ & $4$ & $2^2$ & $2^{-2}$ \rule{0pt}{1em} \\ [0.2ex]
  $\mathrm{II}(6)$ & $0$ & $\geqslant 1$ & $6$ & $\neg$\eqref{eq:p2_extra_condition_1}$\phantom{\neg}$ & $6$ & $2^2$ & $2^{-4}$ \\ [0.2ex]
                   & $\geqslant 1$ & $1$ & $6$ && $6$ & $2^2$ & $2^{-3}$ \\ [0.2ex]
  $\mathrm{II}(7)$ & $0$ & $1$ & $7$ && $7$ & $2^2$ & $2^{-4}$ \\ [0.2ex]
  $\mathrm{III}(3)$ & $0$ & $0$ & $4$ & \eqref{eq:p2_extra_condition_1} and $\neg$\eqref{eq:p2_extra_condition_2} & $3$ & $2^2$ & $2^{-4}$ \\ [0.2ex]
                    & $1$ & $0$ & $4$ & \eqref{eq:p2_extra_condition_1} & $3$ & $2^2$ & $2^{-4}$ \\ [0.2ex]
  $\mathrm{III}(5)$ & $0$ & $\geqslant 1$ & $6$ & \eqref{eq:p2_extra_condition_1} & $5$ & $2^2$ & $2^{-4}$ \\ [0.2ex]
  $\mathrm{III}(7)$ & $1$ & $2$ & $8$ && $7$ & $2^3$ & $2^{-5}$ \\ [0.2ex]
  $\mathrm{III}(8)$ & $1$ & $\geqslant 3$ & $9$ && $8$ & $2^3$ & $2^{-5}$ \\ [0.2ex]
  $\mathrm{IV}$ & $0$ & $0$ & $4$ & \eqref{eq:p2_extra_condition_1} and \eqref{eq:p2_extra_condition_2}& $2$ & $2^2$ & $2^{-4}$ \\ [0.2ex]
                & $\geqslant 2$ & $0$ & $4$ & \eqref{eq:p2_extra_condition_1} & $2$ & $2^2$ & $2^{-4}$ \\ [0.2ex]
  $\mathrm{I}_0^*(4)$ & $0$ & $1$ & $8$ & $\neg$\eqref{eq:p2_extra_condition_3}$\phantom{\neg}$ & $4$ & $2^4$ & $2^{-6}$ \\ [0.2ex]
                      & $\geqslant 2$ & $2$ & $8$ & $\neg$\eqref{eq:p2_extra_condition_3}$\phantom{\neg}$ & $4$ & $2^4$ & $2^{-6}$ \\ [0.2ex]
  $\mathrm{I}_0^*(5)$ & $0$ & $1$ & $9$ && $5$ & $2^4$ & $2^{-6}$ \\ [0.2ex]
  $\mathrm{I}_0^*(6)$ & $\geqslant 2$ & $3$ & $10$ && $6$ & $2^4$ & $2^{-6}$ \\ [0.2ex]
  $\mathrm{I}_1^*$ & $0$ & $1$ & $8$ & \eqref{eq:p2_extra_condition_3} and $\neg$\eqref{eq:p2_extra_condition_4} & $3$ & $2^4$ & $2^{-7}$ \\ [0.2ex]
                  & $2$ & $2$ & $8$ & \eqref{eq:p2_extra_condition_3} & $3$ & $2^4$ & $2^{-7}$ \\ [0.2ex]
  $\mathrm{I}_2^*(4)$ & $0$ & $1$ & $10$ & $\neg$\eqref{eq:p2_extra_condition_4}$\phantom{\neg}$ & $4$ & $2^5$ & $2^{-8}$ \\ [0.2ex]
  $\mathrm{I}_2^*(6)$ & $2$ & $\geqslant 5$ & $12$ & $\neg$\eqref{eq:p2_extra_condition_5}$\phantom{\neg}$ & $6$ & $2^5$ & $2^{-9}$ \\ [0.2ex]
  $\mathrm{I}_2^*(7)$ & $2$ & $4$ & $13$ && $7$ & $2^5$ & $2^{-9}$ \\ [0.2ex]
  $\mathrm{I}_3^*(4)$ & $0$ & $1$ & $11$ & $\neg$\eqref{eq:p2_extra_condition_4}$\phantom{\neg}$ & $4$ & $2^6$ & $2^{-9}$ \\ [0.2ex]
  $\mathrm{I}_3^*(5)$ & $2$ & $\geqslant 5$ & $12$ & \eqref{eq:p2_extra_condition_5} & $5$ & $2^5$ & $2^{-9}$ \\ [0.2ex]
  $\mathrm{I}_4^*(4)$ & $0$ & $1$ & $12$ & $\neg$\eqref{eq:p2_extra_condition_4}$\phantom{\neg}$ & $4$ & $2^{7}$ & $2^{-10}$ \\ [0.2ex]
  $\mathrm{I}_4^*(6)$ & $2$ & $4$ & $14$ && $6$ & $2^{5}$ & $2^{-10}$ \\ [0.2ex]
  $\mathrm{I}_\m^*(4),\; \m \geqslant 5$ & $0$ & $1$ & $\m+8$ && $4$ & $2^{\m+3}$ & $2^{-\m-6}$ \\ [0.2ex]
  $\mathrm{I}_\m^*(6),\; \m \geqslant 5$ & $2$ & $4$ & $\m+10$ && $6$ & $2^{\m+2}$ & $2^{-\m-6}$ \\ [0.2ex]
  $\mathrm{IV}^*$ & $0$ & $1$ & $8$ & \eqref{eq:p2_extra_condition_3} and \eqref{eq:p2_extra_condition_4} & $2$ & $2^4$ & $2^{-7}$ \\ [0.2ex]
                  & $\geqslant 3$ & $2$ & $8$ & \eqref{eq:p2_extra_condition_3} & $2$ & $2^4$ & $2^{-7}$ \\ [0.2ex]
  $\mathrm{III}^*(3)$ & $0$ & $1$ & $10$ & \eqref{eq:p2_extra_condition_4} & $3$ & $2^5$ & $2^{-8}$ \\ [0.2ex]
  $\mathrm{III}^*(5)$ & $3$ & $4$ & $12$ && $5$ & $2^5$ & $2^{-9}$ \\ [0.2ex]
  $\mathrm{III}^*(7)$ & $3$ & $5$ & $14$ && $7$ & $2^6$ & $2^{-10}$ \\ [0.2ex]
  $\mathrm{III}^*(8)$ & $3$ & $\geqslant 6$ & $15$ && $8$ & $2^6$ & $2^{-10}$ \\ [0.2ex]
  $\mathrm{II}^*(3)$ & $0$ & $1$ & $11$ & \eqref{eq:p2_extra_condition_4} & $3$ & $2^6$ & $2^{-9}$ \\ [0.2ex]
  $\mathrm{II}^*(4)$ & $\geqslant 4$ & $4$ & $12$ & $\neg$\eqref{eq:p2_extra_condition_6}$\phantom{\neg}$ & $4$ & $2^6$ & $2^{-10}$ \\ [0.2ex]
  $\mathrm{II}^*(6)$ & $\geqslant 4$ & $5$ & $14$ && $6$ & $2^6$ & $2^{-10}$ \\ [0.2ex]
  $2^{12}\mathrm{I}_0$ & $0$ & $1$ & $12$ & \eqref{eq:p2_extra_condition_4} & $0$ & $2^7$ & $2^{-10}$ \\ [0.2ex]
                      & $\geqslant 4$ & $4$ & $12$ & \eqref{eq:p2_extra_condition_6} & $0$ & $2^7$ & $2^{-10}$ \\ [0.2ex]
  $2^{12}\mathrm{I}_\m,\; \m \geqslant 1$ & $0$ & $1$ & $\m+12$ & 
  & $1$ & $2^{\m+7}$ & $2^{-\m-10}$ \\ [0.2ex]
  $2^4 \!\mid\! a \;\text{and}\; 2^6 \!\mid\! b$ & $\geqslant 4$ & $\geqslant 6$ & $\geqslant 16$ && --- & $2^6$ & $2^{-10}$ \\ [0.2ex]
  \hline
\end{tabular}
\begin{tablecap}\label{table:red2_all}
  Reduction information at $p = 2$ for the family $\cF$. 
\end{tablecap}
\vspace{-\baselineskip}
\end{table}

\begin{align}
  \label{eq:p2_extra_condition_1}
  &2^2 \mid b - b^2 + a^2 + a^3\\
  \label{eq:p2_extra_condition_2}
  &2^3 \mid \psi_3(a)\\
  \label{eq:p2_extra_condition_3}
  &\text{There exist $r,t \in \Z$ such that $2^5 \mid \psi_3(r)$ and $2^6 \mid \psi_2(r) - 4t^2$ simultaneously.}\\
  \label{eq:p2_extra_condition_4}
  &\text{There exists $r \in \Z$ such that $r = 1$ or $2 \ \mathrm{mod}\ 4$ and $2^5 \mid \psi_3(r)$ simultaneously.}\\
  \label{eq:p2_extra_condition_5}
  &2^4 \mid a - 4\\
  \label{eq:p2_extra_condition_6}
  &\text{There exist $r,t \in \Z$ such that $2^8 \mid \psi_3(r)$ and $2^8 \mid \psi_2(r) - 4t^2$ simultaneously.}
\end{align}

In \cref{table:red2_all,table:red3_all}, we additionally list extra conditions on $a$ and $b$ whenever $v_p(a)$, $v_p(b)$, and $v_p(\Delta)$ alone are insufficient to uniquely specify the reduction type. These are given in \cite[\S II]{papadopoulos} (except for \eqref{eq:p2_extra_condition_5}; see \cref{rem:papadopoulos_correction}).
In conditions \eqref{eq:p2_extra_condition_1}---\eqref{eq:p2_extra_condition_6}, $\psi_2(x) \coloneqq 4(x^3 + ax + b)$ and $\psi_3(x) \coloneqq 3x^4 + 6ax^2 + 12bx - a^2$ denote the $2$- and $3$-division polynomials of $E_{a,b}$.

\begin{align}
  \label{eq:p3_type_III_extra_condition}
  &3^2 \mid a + b^2 - 1\\
  \label{eq:p3_type_IIIstar_extra_condition}
  &3^2 \mid 3^{-2}a + 3^{-6}b^2 - 1
\end{align}




\begin{table}[H]
\begin{tabular}{|l|r|r|r|c||c|c|l|}
  \multicolumn{1}{c}{} & \multicolumn{4}{c}{Conditions} & \multicolumn{3}{c}{} \\
  \hline
  Type & $v_3(a)$ & $v_3(b)$ & $v_3(\Delta)$ & Extra & $v_3(N)$ & $\QT$ & Proportion \rule{0pt}{1em}\\
  [0.2ex]
  \hline
  $\mathrm{I}_0$   & $\geqslant 0$ & $\geqslant 0$ & $0$ && $0$ & $3$ & $2 \cdot 3^{-1}$ \rule{0pt}{1em} \\ [0.2ex]
  $\mathrm{II}(3)$ & $\geqslant 1$ & $0$ & $3$ & $\neg$\eqref{eq:p3_type_III_extra_condition}$\phantom{\neg}$ & $3$ & $3^2$ & $8 \cdot 3^{-4}$ \\ [0.2ex]
                   & $1$      & $1$ &     &                                        &     & $3^2$ & $4 \cdot 3^{-4}$ \\ [0.2ex]
  $\mathrm{II}(4)$ & $1$ & $0$ & $4$ && $4$ & $3^2$ & $4 \cdot 3^{-4}$ \\ [0.2ex]
  $\mathrm{II}(5)$ & $\geqslant 2$ & $1$ & $5$ && $5$ & $3^2$ & $2 \cdot 3^{-4}$ \\ [0.2ex]
  $\mathrm{III}$   & $\geqslant 1$ & $0$      & $3$ & \eqref{eq:p3_type_III_extra_condition} & $2$ & $3^2$ & $4 \cdot 3^{-4}$ \\ [0.2ex]
                   & $1$      & $\geqslant 2$ &     &                                        &     & $3^2$ & $2 \cdot 3^{-4}$ \\ [0.2ex]
  $\mathrm{IV}(3)$ & $1$ & $0$ & $5$ && $3$ & $3^3$ & $4 \cdot 3^{-5}$ \\ [0.2ex]
  $\mathrm{IV}(4)$ & $2$ & $2$ & $6$ && $4$ & $3^3$ & $4 \cdot 3^{-6}$ \\ [0.2ex]
  $\mathrm{IV}(5)$ & $\geqslant 3$ & $2$ & $7$ && $5$ & $3^3$ & $2 \cdot 3^{-6}$ \\ [0.2ex]
  $\mathrm{I}_0^*$ & $1$ & $0$      & $6$ && $2$ & $3^4$ & $4 \cdot 3^{-6}$ \\ [0.2ex]
                   & $2$ & $\geqslant 3$ & $6$ && $2$ & $3^3$ & $2 \cdot 3^{-6}$ \\ [0.2ex]
  $\mathrm{I}_\m^*,\; \m \geqslant 1$ & $1$ & $0$ & $\m+6$ && $2$ & $3^{\m+4}$ & $4 \cdot 3^{-\m-6}$ \\ [0.2ex]
  $\mathrm{IV}^*(3)$ & $\geqslant 3$ & $3$ & $9$ & $\neg$\eqref{eq:p3_type_IIIstar_extra_condition}$\phantom{\neg}$ & $3$ & $3^5$ & $8 \cdot 3^{-9}$ \\ [0.2ex]
                     & $3$      & $4$ & $9$ &                                            & $3$ & $3^5$ & $4 \cdot 3^{-9}$ \\ [0.2ex]
  $\mathrm{IV}^*(4)$ & $3$ & $3$ & $10$ && $4$ & $3^5$ & $4 \cdot 3^{-9}$ \\ [0.2ex]
  $\mathrm{IV}^*(5)$ & $\geqslant 4$ & $4$ & $11$ && $5$ & $3^5$ & $2 \cdot 3^{-9}$ \\ [0.2ex]
  $\mathrm{III}^*$   & $\geqslant 3$ & $3$ & $9$ & \eqref{eq:p3_type_IIIstar_extra_condition} & $2$ & $3^5$ & $4 \cdot 3^{-9}$ \\ [0.2ex]
                     & $3$ & $\geqslant 5$ & $9$ &                                            & $2$ & $3^5$ & $2 \cdot 3^{-9}$ \\ [0.2ex]
  $\mathrm{II}^*(3)$ & $3$ & $3$ & $11$ && $3$ & $3^6$ & $4 \cdot 3^{-10}$ \\ [0.2ex]
  $\mathrm{II}^*(4)$ & $4$ & $5$ & $12$ && $4$ & $3^6$ & $4 \cdot 3^{-11}$ \\ [0.2ex]
  $\mathrm{II}^*(5)$ & $\geqslant 5$ & $5$ & $13$ && $5$ & $3^6$ & $2 \cdot 3^{-11}$ \\ [0.2ex]
  $3^{12}\mathrm{I}_0$ & $3$ & $3$ & $12$ && $0$ & $3^7$ & $4 \cdot 3^{-11}$ \\ [0.2ex]
  $3^{12}\mathrm{I}_\m,\; \m \geqslant 1$ & $3$ & $3$ & $\m+12$ && $1$ & $3^{\m+7}$ & $4 \cdot 3^{-\m-11}$ \\ [0.2ex]
  $3^4 \!\mid\! a \;\text{and}\; 3^6 \!\mid\! b$ & $\geqslant 4$ & $\geqslant 6$ & $\geqslant 12$ && --- & $3^6$ & $3^{-10}$ \\ [0.2ex]
  \hline
\end{tabular}
\begin{tablecap}\label{table:red3_all}
  Reduction information at $p = 3$ for the family $\cF$. See the surrounding text for further details.
\end{tablecap}
\vspace{-\baselineskip}
\end{table}

The information presented in \cref{table:redp,table:red2_all,table:red3_all} either comes directly from, or is easily derived from \cite{papadopoulos}. For $p\geq 5$, similar information is presented in \cite[Thm.\ 1.6]{SSW} and \cite[Prop.\ 2.2]{cremona_sadek}. In the case $p = 2$ we made some minor adjustments to \cite{papadopoulos}, described in \cref{rem:papadopoulos_correction}.

\begin{remark}
  \label{rem:papadopoulos_correction}
  The following comments pertain to \cref{table:red2_all}.
\begin{enumerate}[label=(\roman*)]
\item
  There is a minor error in \cite[Tableau IV]{papadopoulos}. The condition for the second case of ``Equation non minimale'' is given as $v_2(c_4) \geqslant 8$, $v_2(c_6) = 9$, and $v_2(\Delta) = 12$. The condition on $\Delta$ should read $v_2(\Delta) \geqslant 12$.
  An example is $E': y^2 + xy + y = x^3 + 8x^2 + 588x + 1724$, which is the curve one obtains heading into step 11 of Tate's algorithm as described in \cite[\S IV.9]{silverman} upon inputting $E: y^2 = x^3 + 565x + 6$.
\item
  The reduction types we call $\mathrm{I}_2^*(6)$ and $\mathrm{I}_3^*(5)$ cannot be distinguished by $v_2(c_4)$, $v_2(c_6)$, and $v_2(\Delta)$ alone, and none of the conditions from \cite[\S II.3]{papadopoulos} address this. We have introduced \eqref{eq:p2_extra_condition_5} which distinguishes them in the case of short Weierstrass models.
\item
  We modified the condition $v_2(c_6) \geqslant 9$ to $v_2(c_6) \geqslant 10 \Leftrightarrow v_2(b) \geqslant 5$. For short Weierstrass equations, if $v_2(a) = 2$ and $v_2(b) = 4$ then necessarily $v_2(\Delta) \geqslant 13$, whereas $v_2(\Delta) = 12$ for both $\mathrm{I}_2^*(6)$ and $\mathrm{I}_3^*(5)$.
\end{enumerate}
\end{remark}

\Cref{fig:red23} illustrates, for $p = 2$ and $3$, the sets $S_\T$, i.e.\ the conditions on $a$ and $b$ which determine $E_{a,b}$ to have reduction type $\T$.

\begin{figure}[H]
  \hspace{-1.6em}
  \includegraphics[width=1.02\textwidth]{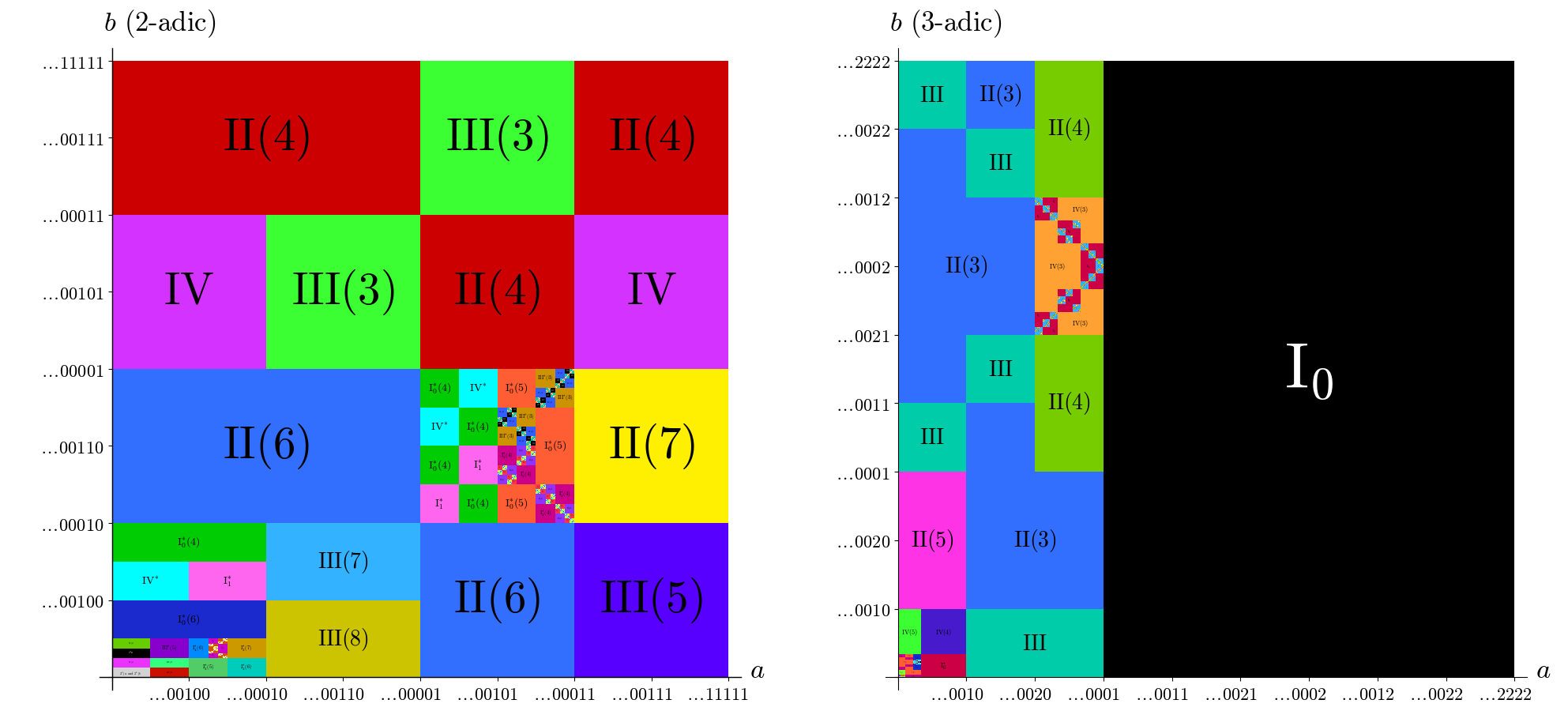}
  \begin{figurecap}\label{fig:red23}
    Reduction types of $E_{a,b}: y^2 = x^3 + ax + b$ at $p = 2$ (left) and $p = 3$ (right), as a function of $a$ and $b$. The axes order the coefficients $a$ and $b$ $p$-adically, with coefficients more highly divisible plotted nearer to the origin.
    For example, at $p = 2$, $E_{a,b}$ has reduction type $\mathrm{II}(7)$ (i.e.\ reduction type $\mathrm{II}$ with the additional condition that $v_2(N) = 7$) if $a = 3 \mod 4$ and $2 \parallel b$.
    A higher resolution version of this figure is available at \cite{github_conductors}: \href{https://github.com/thealexcowan/conductordistribution/blob/main/reduction_map_at_2.png}{$p = 2$}, \href{https://github.com/thealexcowan/conductordistribution/blob/main/reduction_map_at_3.png}{$p = 3$}. The illustrated conditions on $a$ and $b$ are also listed in \cref{table:red2_all,table:red3_all}.
  \end{figurecap}
\end{figure}

At the primes $p = 2$ and $3$, the behaviour of the sort tabulated above differs between the families $\cF$ and $\cFY$. We now turn our attention to the latter family.

\begin{lemma}\label{local_densities_3}
  Each $E_{a,b} \in \cFY(H)$ has good reduction at $3$.
\end{lemma}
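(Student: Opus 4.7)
The plan is very short: exploit the congruence conditions built into the definition of $\cF(H)$ to show that $3$ does not divide the discriminant of a minimal model. By \cref{cF_def}, any $E_{a,b} \in \cF(H)$ satisfies $a \equiv r \pmod 6$ with $3 \nmid r$, so $3 \nmid a$. The discriminant of $E_{a,b}$ is $\Delta_{E_{a,b}} = -16(4a^3 + 27b^2)$, and reducing mod $3$ collapses the second term, leaving $-64 a^3 \pmod 3$, which is a unit precisely because $3 \nmid a$.

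It remains to upgrade ``$3 \nmid \Delta_{E_{a,b}}$'' to ``good reduction at $3$.'' The definition of $\cF(H)$ includes the divisibility condition $p^4 \mid a \implies p^6 \nmid b$, which guarantees that the short Weierstrass model $y^2 = x^3 + ax + b$ is minimal at every prime, including $p = 3$ (this is the reason the condition was stipulated in \cref{cF_def}, per the remark following it). For a minimal Weierstrass model, good reduction at $p$ is equivalent to $p \nmid \Delta$; in characteristic $3$, one can verify this directly by noting that the partial derivatives of $y^2 - x^3 - ax - b$ with respect to $y$ and $x$ are $2y$ and $-a$ (since $3x^2 \equiv 0$), so the reduction mod $3$ is smooth as soon as $3 \nmid a$.

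There is no real obstacle here; the lemma is essentially a bookkeeping consequence of how $\cF(H)$ is defined. The only point worth being careful about is the characteristic-$3$ smoothness criterion for short Weierstrass equations, which I would dispatch with the partial-derivative check above rather than appealing to a black-box statement about minimal discriminants.
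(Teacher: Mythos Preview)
Your proof is correct. The paper's own proof is simply a citation to \cite[\S 2.1]{young}, so your direct computation is the more self-contained route; the underlying observation (that $3\nmid a$ forces $3\nmid\Delta_{E_{a,b}}$, hence smooth reduction) is exactly what that reference records.

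One small simplification: the appeal to minimality is not needed here. If \emph{any} integral Weierstrass model has $p\nmid\Delta$, then its reduction over $\F_p$ is already a smooth Weierstrass curve, and that alone certifies good reduction at $p$; minimality only becomes relevant when one wants to correctly diagnose the reduction type in the bad-reduction case. Your partial-derivative check in characteristic~$3$ is precisely this smoothness verification and stands on its own.
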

\begin{proof}
  \Cref{fig:red23} or \cite[\S 2.1]{young:elliptic_curves}.
\end{proof}

\begin{lemma}\label{local_densities_2}
  \Cref{table:red2} gives the reduction type at $p = 2$ of $E_{a,b} \in \cFY(H)$, where $r$ and $t$ are as in \cref{cF_tilde_def}.
\begin{table}[H]
\emph{
\begin{tabular}{|c|c|c|l||c|c|c|l|}
  \hline
  $r$, $t$ & $a \mod 12$ & $b \mod 12$ & Reduction type at $2$ & $r$, $t$ & $a \mod 12$ & $b \mod 12$ & Reduction type at $2$ \rule{0pt}{1em}\\
  [0.2ex]
  \hline
  \multirow{4}{*}{$1,1$} & $1$ & $1$ & Type II  & \multirow{4}{*}{$2,1$} & $2$ & $1$ & Type III \rule{0pt}{1em}\\ [0.2ex]
  & $1$ & $7$ & Type III & & $2$ & $7$ & Type II  \\ [0.2ex]
  & $7$ & $1$ & Type IV  & & $8$ & $1$ & Type IV  \\ [0.2ex]
  & $7$ & $7$ & Type II  & & $8$ & $7$ & Type II  \\ [0.2ex]
  \hline
  \multirow{4}{*}{$1,3$} & $1$ & $3$ & Type III & \multirow{4}{*}{$2,3$} & $2$ & $3$ & Type II  \rule{0pt}{1em}\\ [0.2ex]
  & $1$ & $9$ & Type II  & & $2$ & $9$ & Type III \\ [0.2ex]
  & $7$ & $3$ & Type II  & & $8$ & $3$ & Type II  \\ [0.2ex]
  & $7$ & $9$ & Type IV  & & $8$ & $9$ & Type IV  \\ [0.2ex]
  \hline
  \multirow{4}{*}{$1,5$} & $1$ &  $5$ & Type II  & \multirow{4}{*}{$2,5$}  & $2$ &  $5$ & Type III \rule{0pt}{1em}\\ [0.2ex]
  & $1$ & $11$ & Type III & & $2$ & $11$ & Type II  \\ [0.2ex]
  & $7$ &  $5$ & Type IV  & & $8$ &  $5$ & Type IV  \\ [0.2ex]
  & $7$ & $11$ & Type II  & & $8$ & $11$ & Type II  \\ [0.2ex]
  \hline
  $5,t$ & \multicolumn{3}{c||}{Same as $1,t$ with $a,b \mapsto a+4,b$} & $4,t$ & \multicolumn{3}{c|}{Same as $2,t$ with $a,b \mapsto a+8,b$} \rule{0pt}{1em}\\ [0.2ex]
  \hline
\end{tabular}
}
\begin{tablecap}\label{table:red2}
  Reduction type at $p = 2$ of $E_{a,b} \in \cFY(H)$ as a function of $a,b \ \mathrm{mod}\ 12$, with corresponding choices of $r,t$ from the definition of $\cFY(H)$.
\end{tablecap}
\vspace{-\baselineskip}
\end{table}
\end{lemma}
\begin{proof}
  This information can be read directly from \cref{fig:red23}.
  Alternatively, follow \cite[\S IV.9]{silverman}. In step 2, to put the singular point at $(0,0)$, when $2 \mid r$ make the change of variables $y \mapsto y+1$, and when $2 \nmid r$ make the changes of variables $y \mapsto y+x$ then $x \mapsto x+1$. The Weierstrass equation $y^2 = x^3 + ax + b$ becomes in each of these cases $$y^2 + 2y = x^3 + ax + b-1 \quad\text{or}\quad y^2 + 2xy + 2y = x^3 + 2x^2 + (a+1)x + a+b$$ respectively. Following the algorithm and using the assumptions on $a$ and $b$, we obtain \cref{table:red2}. Note that in our case the algorithm terminates in step 5 at the latest.
\end{proof}

It is immediate from \cref{table:redp,table:red2_all,table:red3_all} that, for each prime $p$ and each non-negative integer $k$, there exists a corresponding positive integer $\QM$ such that, for $E_{a,b} \in \cF(H)$, the condition $\gcd(\Delta/N, p^\infty) = p^k$ is equivalent to a congruence condition on $(a,b)$ modulo $\QM$. The values of $\rho(p,\m)$ from \cref{rho_def} are chosen such that, for each $k \in \Z_{\geqslant 0}$, the proportion of residue classes satisfying said congruence condition is as given in \cref{table:rho_def}. The same holds true for $\rhoY(p,\m)$ from \cref{rho_tilde_def} vis-\`a-vis \cref{local_densities_3,local_densities_2}. For $p \geqslant 5$ this information appears in \cite[Table 2]{SSW}.

\begin{definition}\label{def:QC}
For $\q > 5$, define
\begin{align*}
  \QD \coloneqq \prod_{\phantom{{}_*}p \,<\, \q} p^{\left\lfloor \frac{\log \q}{\log p} \right\rfloor + 2}
  \quad\quad\text{and}\quad\quad
  C \coloneqq \prod_{\phantom{{}_*}p \,<\, \q} p^{\left\lfloor \frac{\log \q}{\log p} \right\rfloor}.
\end{align*}
For each $\m \mid C$, define $S_{\QD,\m}$ to be the subset of $(\Z/\QD\Z)^2$ such that $(a,b) \in S_{\QD,\m}$ if and only if
\begin{align}
  \label{eq:d4d6_m} &\gcd\!\left(\frac{|\Delta_{E_{a,b}}|}{N_{E_{a,b}}}, C\right) = \m
  \shortintertext{and}
  \label{eq:d4d6_minimal} &\text{for all $p < \q$, if $p^6 \mid \QD$ and $p^4 \mid a$ then $p^6 \nmid b$}.
\end{align}
In the condition \eqref{eq:d4d6_m} we write $E_{a,b}$ for $(a,b) \in (\Z/\QD\Z)^2$ to mean any elliptic curve $E_{\hat{a},\hat{b}} \in \cF(H)$ with $(\hat{a},\hat{b}) \in \Z^2$ such that $\hat{a} = a \mod \QD$ and $\hat{b} = b \mod \QD$. \Cref{table:redp,table:red2_all,table:red3_all} confirm that the quantity appearing in \eqref{eq:d4d6_m} does not depend on the choice of lifts $\hat{a}$ and $\hat{b}$.

Moreover, define $\SY_{\QD,\m}$ to be the subset of $(\Z/\QD\Z)^2$ satisfying \eqref{eq:d4d6_m}, \eqref{eq:d4d6_minimal}, as well as
\begin{align}
  \label{eq:d4d6_ab} &a = r \mod 6,\,\, b = t \mod 6, \,\,\text{where $r,t$ are as in \cref{cF_tilde_def}.}
\end{align}
\end{definition}

The following lemma establishes that the hypotheses of \cref{disc_dist} are satisfied for $S_{\QD,\m}$ and $\SY_{\QD,\m}$ as defined above.

\begin{lemma}\label{d4d6_by_type}
  With the notation of \cref{def:QC}, for all $\q > 5$, all $\m\mid C$, and all $(a,b) \in S_{\QD,\m}$ or $\SY_{\QD,\m}$, there does not exist $d \in \Z$ such that $\gcd(d,\QD) > 1$ and $\gcd(d^4,\QD) \mid \gcd(a,\QD)$ and $\gcd(d^6,\QD) \mid \gcd(b,\QD)$.
\end{lemma}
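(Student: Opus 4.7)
The plan is to assume such a $d$ exists and derive a contradiction by localizing at a single prime $p \mid \gcd(d, Q)$. Writing $n = v_p(d) \ge 1$ and $k = v_p(Q)$, the two divisibility hypotheses unwind (on any integer lift of $(a,b)$) to $v_p(a) \ge \min(4n, k)$ and $v_p(b) \ge \min(6n, k)$; in particular, since $n \ge 1$, one has $v_p(a) \ge \min(4, k)$ and $v_p(b) \ge \min(6, k)$. I would then split into cases according to $p$.

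For $p \in \{2, 3\}$, the residue condition \eqref{eq:d4d6_ab} dispatches the case immediately. At $p = 2$, $k = v_2(Q) = 2$ forces $v_2(b) \ge 2$, contradicting $b \equiv t \pmod 6$ with $t$ odd from \cref{cF_def}. At $p = 3$, $k = 1$ forces $v_3(a) \ge 1$, contradicting $3 \nmid r$.

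For $p \ge 5$ with $p^6 \mid Q$ (equivalently $p \le q^{1/4}$, so $k \ge 6$), the bounds become $v_p(a) \ge 4$ and $v_p(b) \ge 6$; this is exactly the hypothesis of \eqref{eq:d4d6_minimal}, yielding the required contradiction directly.

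The delicate case is $p \ge 5$ with $p^6 \nmid Q$, i.e., $k \in \{3, 4, 5\}$, where \eqref{eq:d4d6_minimal} is silent; the contradiction must then come from \eqref{eq:d4d6_m}. My plan is to compute $v_p(\Delta_{E_{a,b}}) = v_p(-16(4\hat a^3 + 27\hat b^2)) \ge 2k$ on any lift obeying $v_p(\hat a) \ge \min(4, k)$ and $v_p(\hat b) \ge k$, and then to chase through the reduction-type catalog in \cref{table_redp}: every $\cF(H)$-minimal lift compatible with these valuations forces $v_p(|\Delta|/N) \ge 2(k-1)$, which strictly exceeds $v_p(C) = k - 2$. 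The main obstacle I foresee is converting this overshoot into an actual inconsistency with $(a, b) \in S_{Q, m}$ for some $m \mid C$: the $\gcd$ in \eqref{eq:d4d6_m} merely caps the $p$-part of $m$ at $v_p(C)$, so closing the argument requires a careful reuse of \cref{local_densities_p} to identify, within each residue class mod $p^k$, which specific reduction types can occur, and to confirm that the divisibility coming from $d$ leaves no reduction type compatible with $m \mid C$.
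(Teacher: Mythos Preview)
Your plan coincides with the paper's proof for $p\in\{2,3\}$ and for $p\ge 5$ with $p^6\mid Q$. The divergence, and the gap, is in the remaining case $p\ge 5$ with $k=v_p(Q)\in\{3,4,5\}$. Your lower bound $v_p(|\Delta|/N)\ge 2(k-1)>v_p(C)$ is correct, but, as you yourself anticipate, it does not contradict \eqref{eq:d4d6_m}: when $v_p(m)=v_p(C)$ the equality $\gcd(|\Delta|/N,C)=m$ only says $v_p(|\Delta|/N)\ge v_p(C)$. Concretely, for $k=3$ a minimal lift with $v_p(\hat a)=v_p(\hat b)=3$ has Kodaira type $\mathrm{I}_0^*$ and $v_p(|\Delta|/N)=4$, and sits in $S_{Q,m}$ with $v_p(m)=1=v_p(C)$. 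So your proposed closure --- ``confirm that the divisibility coming from $d$ leaves no reduction type compatible with $m\mid C$'' --- fails at the boundary $v_p(m)=v_p(C)$: there \emph{are} compatible types.

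The paper handles this case in the opposite direction. Rather than starting from the valuations forced by $d$ and trying to exclude all reduction types, it first restricts (via $p^4\nmid C$ and \cref{local_densities_p}) to the list $\mathrm{I}_0,\mathrm{I}_1,\mathrm{I}_2,\mathrm{II},\mathrm{III},\mathrm{IV}$, and then invokes \cite{SSW} for the fact that each of these forces $p^2\nmid\hat a$ or $p^3\nmid\hat b$. Since $p^3\mid Q$, the assumption $p\mid d$ gives $p^3\mid\gcd(d^4,Q)$ and $p^3\mid\gcd(d^6,Q)$, hence $p^3\mid a$ and $p^3\mid b$, contradicting that constraint. The missing idea in your outline is this reversal: use the reduction-type catalog to bound $v_p(\hat a)$ and $v_p(\hat b)$ from \emph{above} for the types under consideration, rather than bounding $v_p(|\Delta|/N)$ from below.
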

\begin{proof} We analyze each possible prime factor $p$ of $d$ individually.
  
  If $p^6 \mid \QD$, then the claim of \cref{d4d6_by_type} is automatic from \eqref{eq:d4d6_minimal}.

  If $p^6 \nmid \QD$ then $p^4 \nmid C$ by construction. Examining \cref{table:redp,table:red2_all,table:red3_all}, we see that we need only consider minimal $E_{a,b}$ with reduction type $\mathrm{I}_0$, $\mathrm{I}_1$, $\mathrm{I}_2$, $\mathrm{I}_3$, $\mathrm{I}_4$, $\mathrm{II}$, $\mathrm{III}$, or $\mathrm{IV}$. Upon noting that the standard invariants $c_4$ and $c_6$ \cite[(5) and (6)]{papadopoulos} of the short Weierstrass equation $E_{a,b}$ are $-2^43a$ and $-2^53^3b$ respectively, tableaux I, II, et IV of \cite{papadopoulos} show that, for each of the aforementioned reduction types, either $p^2 \nmid a$ or $p^3 \nmid b$. (Alternatively, see the proof of \cite[Thm.\ 1.6]{SSW}.) However, $p \mid \QD$ implies $p^3 \mid \QD$ by definition, so if $p \mid d$ then $p^3 \mid \gcd(d^4, \QD)$ and hence either $\gcd(d^4, \QD) \nmid \gcd(a,\QD)$ or $\gcd(d^6, \QD) \nmid \gcd(b,\QD)$.
\end{proof}


\section{Approximations and tail bounds}\label{sec:tail}

The proofs in \cref{sec:proofs} make use of several technical results for approximating various quantities. These technical results are catalogued here.

\begin{lemma}\label{euler_prod_approx}
  \begin{align*}
    \prod_{\phantom{{}_*}p < \q}\frac{1 - p^{-2}}{1 - p^{-10}} = \frac{\zeta(10)}{\zeta(2)}(1 + O(\q^{-1})).
  \end{align*}
\end{lemma}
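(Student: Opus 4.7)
The plan is to recognize the infinite product as the exact limit and then bound the tail. Specifically, using the Euler product $\zeta(s)^{-1} = \prod_p (1 - p^{-s})$, I would write
\[
  \prod_{p \geq 5} \frac{1 - p^{-2}}{1 - p^{-10}} = \frac{(1-2^{-10})(1-3^{-10})}{(1-2^{-2})(1-3^{-2})} \cdot \frac{\prod_p (1 - p^{-2})}{\prod_p (1 - p^{-10})} = \frac{(1-2^{-10})(1-3^{-10})}{(1-2^{-2})(1-3^{-2})} \cdot \frac{\zeta(10)}{\zeta(2)},
\]
so the main term of the lemma is exactly the value of the full infinite product over $p \geq 5$. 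Hence the lemma reduces to showing
\[
  \prod_{p \geq q} \frac{1 - p^{-2}}{1 - p^{-10}} = 1 + O(q^{-1}).
\]

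To bound this tail I would take logarithms and use the Taylor expansion $\log(1-x) = -x + O(x^2)$ valid for $|x| \leq \tfrac{1}{2}$, giving
\[
  \log(1 - p^{-2}) - \log(1 - p^{-10}) = -p^{-2} + p^{-10} + O(p^{-4}) = -p^{-2} + O(p^{-4}).
\]
Summing over primes $p \geq q$ and comparing to the integral $\int_q^\infty x^{-2}\, dx$ (or just to the corresponding sum over all integers), I get $\sum_{p \geq q} p^{-2} = O(q^{-1})$ and $\sum_{p \geq q} p^{-4} = O(q^{-3})$. Therefore the logarithm of the tail product is $O(q^{-1})$, and exponentiating yields $1 + O(q^{-1})$, completing the proof.

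There is no serious obstacle: the argument is a standard Euler product identity plus a crude tail estimate. The only thing to check is that $q > 5$ is large enough that $p^{-2} \leq 1/25 < 1/2$ for all $p \geq q$, so the logarithmic expansion is valid term by term.
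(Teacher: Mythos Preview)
Your proof is correct and follows essentially the same route as the paper: identify the full Euler product over $p \geq 5$ as the exact constant $\frac{(1-2^{-10})(1-3^{-10})}{(1-2^{-2})(1-3^{-2})}\cdot\frac{\zeta(10)}{\zeta(2)}$, then bound the logarithm of the tail $\prod_{p \geq q}$ by $O\!\left(\sum_{p \geq q} p^{-2}\right) = O(q^{-1})$ and exponentiate. The paper phrases this equivalently as $\prod_{p<q} = \frac{\zeta(10)}{\zeta(2)}(1+O(q^{-1}))$ and uses the cruder $\log(1+\eps)=O(\eps)$ in place of your Taylor expansion, but the argument is the same.
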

\begin{proof}
  We compute
  \begin{align*}
    \log \prod_{\phantom{{}_*}p < \q}\frac{1 - p^{-2}}{1 - p^{-10}}
    &= \sum_{p} \log(1 - p^{-2}) - \log(1 - p^{-10}) - \sum_{p \geqslant \q} \log(1 - p^{-2}) - \log(1 - p^{-10})\\
    &= \log\frac{\zeta(10)}{\zeta(2)} + O\!\left(\sum_{p \geqslant \q} p^{-2} + p^{-10}\right) \hspace{3cm}\text{(using $\log(1+\eps) = O(\eps)$)}\\
    &= \log\frac{\zeta(10)}{\zeta(2)} + O(\q^{-1}).
  \end{align*}
  The result then follows from the fact that $\exp(\eps) = 1 + O(\eps)$.
\end{proof}

\begin{lemma}\label{rad_comparison}
  \begin{align*}
    \prod_p \frac{\rho(p,\m)}{1 - p^{-2}} \ll \frac{1}{\m}\prod_{p \mid \m} \frac{3}{p}.
  \end{align*}
\end{lemma}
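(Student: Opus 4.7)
The plan is to establish the inequality prime-by-prime, using that both sides factor over the primes dividing $m$, and that each local factor on the left-hand side has a hidden $(1-1/p)$ that can be cancelled against the $1-p^{-2}$ in the denominator.

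First I would dispose of the degenerate cases: if $3 \mid m$ then $\rho(3,m) = 0$, and if $8 \mid m$ then $\rho(2,m) = 0$, so the left-hand side vanishes and the bound is trivial. In the remaining cases \cref{table_rho_def} gives $\rho(2,m) \leq \tfrac{1}{2}$ and $\rho(3,m) = 1$, which already absorb into the implied constant.

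The substantive step is a row-by-row verification in \cref{table_rho_def} that, for every $p \geq 5$ and every $n \geq 1$,
\begin{align*}
\frac{\rho(p,m)}{1-p^{-2}} \leq \frac{3}{p^{n+1}}
\qquad\text{when}\qquad
\gcd(m,p^\infty)=p^n.
\end{align*}
The algebraic observation that makes this clean is that every $p \geq 5$ entry of \cref{table_rho_def} carries an explicit factor of $\bigl(1-\tfrac{1}{p}\bigr)$, which cancels against the matching factor in $1-p^{-2} = \bigl(1-\tfrac{1}{p}\bigr)\bigl(1+\tfrac{1}{p}\bigr)$. What remains in each row has the shape $\frac{c_n(p)}{p^{n+1}(1+1/p)}$ with $c_n(p) \in \{1,\,2-1/p,\,2(1-1/p),\,3-2/p\}$, each of which is at most $3$; this yields the claimed bound with constant $3$ (the worst case being $n\in\{6,7,8\}$, which is the source of the $3$ appearing in the statement).

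Taking the product of these local bounds and combining with $\rho(2,m) \leq \tfrac{1}{2}$ and $\rho(3,m) = 1$ gives
\begin{align*}
\rho(2,m)\rho(3,m)\prod_{\substack{p \geq 5 \\ p \mid m}} \frac{\rho(p,m)}{1 - p^{-2}}
\;\leq\; \tfrac{1}{2}\prod_{\substack{p \geq 5 \\ p \mid m}} \frac{3}{p^{n_p+1}},
\end{align*}
where $n_p$ denotes the $p$-adic valuation of $m$. A short bookkeeping comparison (splitting $m = 2^a m'$ with $\gcd(m',6)=1$ and $a \in \{0,1,2\}$) then shows the ratio of the right-hand side of the display above to $\frac{1}{m}\prod_{p\mid m}\frac{3}{p}$ is bounded by an absolute constant, completing the proof. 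The main obstacle is purely computational: the tedium of checking the ten rows of the table; there is no conceptual difficulty.
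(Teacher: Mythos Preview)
Your proof is correct and is exactly the case-by-case inspection of \cref{table_rho_def} that the paper invokes in its one-line proof; you simply spell out the details. The prime-by-prime factorization, the cancellation of $(1-1/p)$ against the matching factor in $1-p^{-2}$, and the final bookkeeping for the $2$-part are all sound.
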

\begin{proof}
  Follows from inspecting \cref{table:rho_def} case by case.
\end{proof}

\begin{lemma}\label{rad_euler_product}
  For $\Re(s) > 0$,
  \begin{align*}
    \sum_{\m = 1}^\infty \frac{1}{\m^{s}} \prod_{p\mid \m}\frac{3}{p} = \prod_p \left(1 + \frac{3}{p(p^s - 1)}\right).
  \end{align*}
\end{lemma}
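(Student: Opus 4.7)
The plan is to recognize both sides as an Euler product of the multiplicative function
\[
g(m) \;\coloneqq\; \frac{1}{m^s} \prod_{p \mid m} \frac{3}{p}.
\]
First I would check that $g$ is multiplicative: since $\omega(m)$ and $\mathrm{rad}(m)$ are multiplicative functions of $m$, and $g(m) = 3^{\omega(m)}/(m^s\,\mathrm{rad}(m))$ with $g(1) = 1$, this is immediate. On prime powers, $g(p^k) = \frac{3}{p} p^{-ks}$ for $k \geq 1$.

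Next I would justify absolute convergence on $\Re(s) > 0$ so that the Euler product expansion is legitimate. For $\Re(s) > 0$ one has $|p^s - 1| \geq p^{\Re(s)} - 1 \gg p^{\Re(s)}$ uniformly for $p$ large, hence $\sum_p \frac{1}{p\,|p^s - 1|} \ll \sum_p p^{-1-\Re(s)} < \infty$, which gives absolute convergence of the product $\prod_p \bigl(1 + \tfrac{3}{p(p^s-1)}\bigr)$. Equivalently, $\sum_m |g(m)|$ converges on the same region.

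With that in hand, the identity is a one-line calculation using the standard rearrangement of an absolutely convergent Dirichlet series with multiplicative coefficients into an Euler product:
\begin{align*}
  \sum_{m=1}^\infty \frac{1}{m^s} \prod_{p \mid m} \frac{3}{p}
  \;=\; \prod_p \sum_{k=0}^\infty g(p^k)
  \;=\; \prod_p \left(1 + \frac{3}{p} \sum_{k=1}^\infty p^{-ks}\right)
  \;=\; \prod_p \left(1 + \frac{3}{p(p^s - 1)}\right),
\end{align*}
where the inner geometric series sums to $p^{-s}/(1 - p^{-s}) = 1/(p^s-1)$, again valid for $\Re(s) > 0$.

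There is essentially no main obstacle here; the only point that requires any care is verifying convergence on the full half-plane $\Re(s) > 0$ rather than only on $\Re(s) > 1$, but this follows from the extra factor $1/p$ that makes the $p$-th local factor differ from $1$ by $O(p^{-1-\Re(s)})$ rather than $O(p^{-\Re(s)})$.
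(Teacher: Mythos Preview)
Your proof is correct and is exactly the ``straightforward Euler product calculation'' the paper alludes to, just with the details spelled out. In particular, your observation that the local factor differs from $1$ by $O(p^{-1-\Re(s)})$ is what justifies convergence on the full half-plane $\Re(s)>0$.
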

\begin{proof}
  Follows from a straightforward Euler product calculation.
\end{proof}

\begin{lemma}\label{db_sum}
  \begin{align*}
    \sum_{\m = 1}^\M \prod_{p\mid \m}\frac{3}{p} \ll \M^\eps.
  \end{align*}
\end{lemma}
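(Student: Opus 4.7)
The plan is to exploit the fact that $f(m) \coloneqq \prod_{p\mid m}\frac{3}{p}$ is a multiplicative function whose Dirichlet series, computed in \cref{rad_euler_product}, converges for every $\Re(s) > 0$. This is the standard hallmark of a function whose partial sums are $\ll M^\eps$, and it should be deducible by a one-line trick.

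Concretely, I would proceed as follows. Fix $\eps > 0$. For any $m \leq M$ one has $1 \leq (M/m)^\eps$, so
\begin{align*}
  \sum_{m=1}^M \prod_{p\mid m}\frac{3}{p}
  \leq M^\eps \sum_{m=1}^M \frac{1}{m^\eps}\prod_{p\mid m}\frac{3}{p}
  \leq M^\eps \sum_{m=1}^\infty \frac{1}{m^\eps}\prod_{p\mid m}\frac{3}{p}.
\end{align*}
By \cref{rad_euler_product} applied with $s = \eps$, the final sum equals $\prod_p\bigl(1 + \tfrac{3}{p(p^\eps - 1)}\bigr)$.

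It remains to check that this Euler product converges. Taking logarithms and using $\log(1+x) \ll x$, one gets
\begin{align*}
  \log \prod_p\left(1 + \frac{3}{p(p^\eps - 1)}\right) \ll \sum_p \frac{1}{p(p^\eps - 1)} \ll \sum_p \frac{1}{p^{1+\eps}} < \infty,
\end{align*}
so the product is a finite constant depending only on $\eps$. Combining, $\sum_{m \leq M} \prod_{p\mid m}\tfrac{3}{p} \ll_\eps M^\eps$, which is the desired bound. There is no substantive obstacle here; the only thing to be mildly careful about is that the implied constant depends on $\eps$, but this is consistent with the statement's use of $\eps$ in the conclusion.
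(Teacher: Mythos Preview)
Your proof is correct and is essentially identical to the paper's own argument: both use the inequality $1 \leq (M/m)^\eps$ for $m \leq M$ to bound the sum by $M^\eps \sum_{m \geq 1} m^{-\eps}\prod_{p\mid m}\tfrac{3}{p}$, and then invoke \cref{rad_euler_product} for the convergence of the latter series. Your additional verification that the Euler product converges is a harmless elaboration of what the paper leaves implicit.
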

\begin{proof}
  \begin{align*}
    \sum_{\m = 1}^\M \frac{1}{\M^\eps}\prod_{p\mid \m}\frac{3}{p} < \sum_{\m = 1}^\M \frac{1}{\m^\eps}\prod_{p\mid \m}\frac{3}{p},
  \end{align*}
  and from \cref{rad_euler_product} we see that the series on the right converges as $\M \to \infty$.
\end{proof}

\begin{lemma}\label{FDelta_bound}
  For $\lambda > 0$,
  \begin{align*}
    F_\Delta(\lambda) - F_\Delta(-\lambda) \ll \lambda^{\frac{5}{6}}.
  \end{align*}
\end{lemma}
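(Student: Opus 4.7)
The plan is to recognize that $F_\Delta(\lambda) - F_\Delta(-\lambda)$ is $\tfrac{1}{4}$ times the area of the region
\[
R_\lambda \coloneqq \left\{(\alpha,\beta) \in [-1,1]^2 \,:\, |4\alpha^3 + 27\beta^2| < \tfrac{\lambda}{16}\right\},
\]
and to bound this area by a single scaling change of variables. Setting $c = \lambda/16$, I would apply the substitution $\alpha = c^{1/3} u$, $\beta = c^{1/2} v$. Under this map $4\alpha^3 + 27\beta^2 = c(4u^3 + 27v^2)$, the defining inequality becomes $|4u^3 + 27v^2| < 1$, and the Jacobian is $c^{5/6}$. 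Dropping the restriction to $[-1,1]^2$ in the $(u,v)$ region only enlarges the set, so
\[
\mathrm{area}(R_\lambda) \;\leq\; c^{5/6}\cdot \mathrm{area}\!\left(\{(u,v)\in\R^2 \,:\, |4u^3 + 27v^2| < 1\}\right).
\]

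It then suffices to check that the $(u,v)$-region has finite area, since this would give $F_\Delta(\lambda) - F_\Delta(-\lambda) \ll c^{5/6} \ll \lambda^{5/6}$. For $u \geq 0$ the region is contained in $u \leq (1/4)^{1/3}$, $|v| \leq 1/\sqrt{27}$, which is bounded. For $u < 0$, writing $u = -u'$ with $u' > 0$, the inequality becomes $4u'^3 - 1 < 27v^2 < 4u'^3 + 1$; for $u' > (1/4)^{1/3}$ the algebraic identity
\[
\sqrt{\tfrac{4u'^3+1}{27}} - \sqrt{\tfrac{4u'^3-1}{27}} = \frac{2/\sqrt{27}}{\sqrt{4u'^3+1} + \sqrt{4u'^3 - 1}}
\]
shows that the $v$-slice has length $\ll u'^{-3/2}$, which is integrable on $[(1/4)^{1/3},\infty)$. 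The contribution from $0 \leq u' \leq (1/4)^{1/3}$ is manifestly bounded.

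The only mildly delicate point is the unboundedness of the $(u,v)$-region in the $u \to -\infty$ direction, which is why a naive "area is bounded" argument fails; the key observation is that the thickness of the band around the cuspidal curve $4u^3 + 27v^2 = 0$ decays fast enough as $|u| \to \infty$ to produce a convergent tail. Once that finiteness is established, the scaling argument packages everything into the $\lambda^{5/6}$ bound with no further work.
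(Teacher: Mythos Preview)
Your argument is correct and is genuinely different from the paper's proof. The paper first disposes of $\lambda$ bounded away from $0$ using $0 \le F_\Delta(\lambda)-F_\Delta(-\lambda) \le 1$, and for small $\lambda$ performs the substitution $u=\alpha^3$, $v=\beta^2$ and then splits the resulting integral into several pieces, estimating each by hand; the $\lambda^{5/6}$ emerges only after tracking all of these pieces. Your approach instead exploits the weighted homogeneity of $4\alpha^3+27\beta^2$ via the scaling $\alpha=c^{1/3}u$, $\beta=c^{1/2}v$, which immediately produces the factor $c^{5/6}$ and reduces the problem to showing that a single universal region $\{|4u^3+27v^2|<1\}$ has finite area. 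This is cleaner and more conceptual: it handles all $\lambda>0$ at once (no need to separate large and small $\lambda$), it makes transparent why the exponent is $5/6 = \tfrac13+\tfrac12$, and the only analytic input is the convergent tail $\int^\infty u'^{-3/2}\,du'$. The paper's route, by contrast, is essentially a direct computation that would more readily yield an explicit numerical constant or a sharper asymptotic expansion as $\lambda\to 0$, but for the stated $\ll$ bound your scaling argument is strictly simpler.
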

\begin{proof}
  The quantity on the left is between $0$ and $1$, so the statement is automatic for $\lambda$ bounded away from $0$. For $\lambda \to 0$,
  \begingroup
  \allowdisplaybreaks  
  \begin{align*}
    \frac{1}{4}\int_{-1}^1\int_{-1}^1 &\begin{cases}1 & \text{if $-\lambda < -16(4\alpha^3 + 27\beta^2) < \lambda$} \\ 0 & \text{otherwise}\end{cases} \,d\alpha \,d\beta\\
    &= \frac{1}{12}\int_{0}^{1}\int_{-1}^{1} \begin{cases}1 & \text{if $-\lambda < -16(4u + 27v) < \lambda$} \\ 0 & \text{otherwise}\end{cases} \,\frac{du}{u^{\frac{2}{3}}} \,\frac{dv}{v^{\frac{1}{2}}} \hspace{2cm}\text{(setting $u \coloneqq \alpha^3$, $v \coloneqq \beta^2$)}\\
    &= \frac{1}{12}\int_{0}^{\frac{4}{27}\left(1 - \frac{\lambda}{64}\right)}\int_{-\frac{27}{4}v - \frac{\lambda}{64}}^{-\frac{27}{4}v + \frac{\lambda}{64}} \,\frac{du}{u^{\frac{2}{3}}} \,\frac{dv}{v^{\frac{1}{2}}}
    + \frac{1}{12}\int_{\frac{4}{27}\left(1 - \frac{\lambda}{64}\right)}^{\frac{4}{27}\left(1 + \frac{\lambda}{64}\right)}\int_{-1}^{-\frac{27}{4}v + \frac{\lambda}{64}} \,\frac{du}{u^{\frac{2}{3}}} \,\frac{dv}{v^{\frac{1}{2}}}\\
    &= \frac{1}{4}\int_{0}^{\frac{4}{27}\left(1 - \frac{\lambda}{64}\right)} \left(-\frac{27}{4}v + \frac{\lambda}{64}\right)^{\frac{1}{3}} - \left(-\frac{27}{4}v - \frac{\lambda}{64}\right)^{\frac{1}{3}} \,\frac{dv}{v^{\frac{1}{2}}} + O(\lambda)\\
    &= \frac{1}{4}\left(\frac{\lambda}{64}\right)^{\frac{1}{3}}\int_{0}^{\frac{\lambda}{432}} \left(1 - \frac{432v}{\lambda}\right)^{\frac{1}{3}} + \left(1 + \frac{432v}{\lambda}\right)^{\frac{1}{3}} \,\frac{dv}{v^{\frac{1}{2}}}\\
    &\quad
    + \frac{1}{4}\int_{\frac{\lambda}{432}}^{\frac{4}{27}\left(1 - \frac{\lambda}{64}\right)} \left(\frac{27}{4} + \frac{\lambda}{64v}\right)^{\frac{1}{3}} - \left(\frac{27}{4} - \frac{\lambda}{64v}\right)^{\frac{1}{3}} \,\frac{dv}{v^{\frac{1}{6}}} + O(\lambda)\\
    &= \frac{1}{4}\left(\frac{\lambda}{64}\right)^{\frac{1}{3}}\int_{0}^{\frac{\lambda}{432}} \left(1 - \frac{432v}{\lambda}\right)^{\frac{1}{3}} + \left(1 + \frac{432v}{\lambda}\right)^{\frac{1}{3}} \,\frac{dv}{v^{\frac{1}{2}}}\\
    &\quad+ \frac{1}{4}\int_{\frac{\lambda}{432}}^{\frac{4}{27}\left(1 - \frac{\lambda}{64}\right)} \frac{\left(\frac{27}{4} + \frac{\lambda}{64v}\right) - \left(\frac{27}{4} - \frac{\lambda}{64v}\right)}{\left(\frac{27}{4} + \frac{\lambda}{64v}\right)^{\frac{2}{3}} + \left(\frac{27}{4} + \frac{\lambda}{64v}\right)^{\frac{1}{3}} \left(\frac{27}{4} - \frac{\lambda}{64v}\right)^{\frac{1}{3}} + \left(\frac{27}{4} - \frac{\lambda}{64v}\right)^{\frac{2}{3}}} \,\frac{dv}{v^{\frac{1}{6}}} + O(\lambda)\\
    &\leqslant \frac{\lambda^{\frac{1}{3}}}{8} (2^{\frac{1}{3}} + 1) \left(\frac{\lambda}{432}\right)^{\frac{1}{2}} + \frac{\lambda}{128}\int_{\frac{\lambda}{432}}^{\frac{4}{27}\left(1 - \frac{\lambda}{64}\right)} \frac{1}{\left(\frac{27}{4}\right)^{\frac{2}{3}} + 0 + 0} \,\frac{dv}{v^{\frac{7}{6}}} + O(\lambda)\\
    &\ll \lambda^{\frac{5}{6}} + \lambda (\lambda^{-\frac{1}{6}} + 1) + \lambda\\
    &
    \ll \lambda^{\frac{5}{6}}. \qedhere
  \end{align*}
  \endgroup
\end{proof}

\begin{lemma}\label{FN_small_bound}
  For $\lambda > 0$,
  \begin{align*}
    \sum_{\m = 1}^\infty \big(F_\Delta(\m\lambda) - F_\Delta(-\m\lambda)\big)\prod_p \frac{\rho(p,\m)}{1 - p^{-2}} \ll \lambda^{\frac{5}{6}}.
  \end{align*}
\end{lemma}
\begin{proof}
  Using \cref{rad_comparison} and \cref{FDelta_bound},
  \begin{align*}
    \sum_{\m = 1}^\infty \big(F_\Delta(\m\lambda) - F_\Delta(-\m\lambda)\big)\prod_p \frac{\rho(p,\m)}{1 - p^{-2}} \ll \sum_{\m = 1}^\infty \lambda^{\frac{5}{6}} \frac{1}{\m^{\frac{1}{6}}}\prod_{p \mid \m} \frac{3}{p}.
  \end{align*}
  From \cref{rad_euler_product} we see that the series on the right converges.
\end{proof}

\begin{remark}
  \label{rem:diff}
  With some calculus one finds that $F_\Delta(\lambda)$ is not twice differentiable at $\lambda = -432$.
  This is essentially because that's the point at which $-16(4\alpha^3 + 27\beta^2) \leqslant \lambda$ forces $\alpha \geqslant 0$, and the function $x^{\frac{1}{3}}$ is not differentiable at $0$. It follows that the right hand sides of \cref{thm:conductor_distribution_FN} are not twice differentiable at infinitely many points.
  The function $F_\Delta$ is also not twice differentiable at $-64$. These kinks are visible in \cref{fig:thm1}.
\end{remark}

\begin{proposition}\label{FN_bigm}
  Let $0 < \delta < \tfrac{1}{6}$,
  \begin{align*}
    \frac{\#\!\left\{E \in \cF(H) \,:\, H^{-\delta} < \frac{N_E}{H} ,\,\, \M < \frac{|\Delta_E|}{N_E}\right\}}{\#\cF(H)} \ll \M^{-1+\eps}
    ,
  \end{align*}
  and idem for $\cFY(H)$.
\end{proposition}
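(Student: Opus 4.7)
The plan is to stratify the set according to the value $m = |\Delta_E|/N_E$, apply \cref{disc_dist} to each stratum, and sum. Since every $E \in \cF(H)$ satisfies $|\Delta_E| \leq 496 H$, the hypothesis $N_E > H^{1-\delta}$ forces $m \leq 496 H^\delta$, so only finitely many values of $m$ contribute. The condition $|\Delta_E|/N_E = m$ is, by \cref{local_densities_3}, \cref{local_densities_2}, and \cref{local_densities_p}, a conjunction of local conditions on reduction types at each prime $p \mid m$; these can be encoded by a set $S_{Q_m,m} \subseteq (\Z/Q_m\Z)^2$ in the style of \cref{def:QC}, with $Q_m \ll m^{1+\eps}$ and with the hypothesis of \cref{disc_dist} satisfied courtesy of \cref{d4d6_by_type}.

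For each admissible $m$, observe that $|\Delta_E|/N_E = m$ together with $N_E > H^{1-\delta}$ forces $\Delta_E/H \in (m H^{-\delta}, 496) \cup (-496, -m H^{-\delta})$. Applying \cref{disc_dist} with these Archimedean bounds and matching the resulting density against the entries of \cref{table_rho_def} (which in turn come from \cref{table_redp}) gives
\begin{align*}
  \#\{E \in \cF(H) \,:\, |\Delta_E|/N_E = m, \, N_E > H^{1-\delta}\}
  \ll H^{\frac{5}{6}} \rho_\star(m) + H^{\frac{1}{2}} m^{\eps} + m^{1+\eps},
\end{align*}
where $\rho_\star(m) \coloneqq \rho(2,m)\rho(3,m) \prod_{p \geq 5,\, p \mid m} \rho(p,m)/(1-p^{-2})$ and the $F_\Delta$-factor has been bounded trivially by $1$.

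Now sum over $M < m \leq 496 H^\delta$. For the main term, \cref{rad_comparison} yields $\rho_\star(m) \ll m^{-1} \prod_{p \mid m}(3/p)$, so by Abel summation against the partial-sum bound of \cref{db_sum}, $\sum_{m > M} \rho_\star(m) \ll M^{-1+\eps}$, giving a contribution of $\ll H^{5/6} M^{-1+\eps}$. The error-term sum is $\ll H^{\frac{1}{2} + \delta(1+\eps)} + H^{\delta(2+\eps)}$; the hypothesis $\delta < \tfrac{1}{6}$ gives $H^\delta \ll H^{\frac{1}{3} - \delta}$ and $H^\delta \ll H^{\frac{5}{6} - 2\delta}$, which is exactly what is needed to make the error $\ll H^{5/6} M^{-1+\eps}$ uniformly in $M \leq 496 H^\delta$. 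Dividing by $\#\cF(H) \gg H^{5/6}$ via \cref{FH_size} concludes the argument.

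The step I expect to be the main obstacle is the error-term analysis: since the modulus $Q_m$ associated to the exact condition $|\Delta_E|/N_E = m$ grows like $m^{1+\eps}$, the cumulative error from \cref{disc_dist} over the range $m \leq H^\delta$ threatens to exceed $H^{5/6} M^{-1+\eps}$, and the restriction $\delta < \tfrac{1}{6}$ is precisely what rescues it. If this bookkeeping proves inadequate as stated, a refinement in the spirit of the ``swap'' hinted at in \cref{sec:outline} would be to abandon, for $m$ near the upper end of the summation range, the fine reduction-type conditions furnished by \cref{table_redp} in favour of the coarser divisibility condition ``$m \mid 16(4a^3 + 27b^2)$''. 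The latter is enforceable on a much smaller modulus than $Q_m$ and therefore contributes a much smaller error to \cref{disc_dist}, at the cost of a slightly larger (but still summable) main term.
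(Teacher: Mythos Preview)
Your overall architecture --- stratify by $m = |\Delta_E|/N_E$, apply \cref{disc_dist}, sum, and control the tail of the main term by \cref{rad_comparison} and \cref{db_sum} --- is exactly the paper's. Two points need correction.

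First, the sentence ``the condition $|\Delta_E|/N_E = m$ is \ldots\ a conjunction of local conditions at each prime $p\mid m$'' is false as written: primes $p\nmid m$ also impose the nontrivial condition $v_p(|\Delta_E|/N_E)=0$. What your $S_{Q_m,m}$ actually cuts out is the larger set $\{E : v_p(|\Delta_E|/N_E)=v_p(m)\text{ for all }p\mid 6m\}$, so you are (harmlessly) over-counting. The paper makes this step explicit by relaxing the equality to the divisibility $m\mid \Delta_E/N_E$ before invoking \cref{disc_dist}; this \emph{is} the ``swap'' alluded to in \cref{sec:outline}, not a contingency plan.

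Second, the bound $Q_m\ll m^{1+\eps}$ is wrong. From \cref{local_densities_p} the relevant modulus is $Q_m = |m|\prod_{p\mid m}p^2$, which for squarefree $m$ is $\asymp m^3$. Consequently the error from \cref{disc_dist} is, per $m$, of order $H^{1/2}\prod_{p\mid m}p + m\prod_{p\mid m}p^3 \ll H^{1/2}m + m^4$, and summing over $m\le 496H^\delta$ yields $H^{1/2+2\delta}+H^{5\delta}$ rather than your $H^{1/2+\delta(1+\eps)}+H^{\delta(2+\eps)}$. After dividing by $\#\cF(H)\asymp H^{5/6}$ one needs $-\tfrac13+2\delta<0$ and $-\tfrac56+5\delta<0$; the second is exactly the threshold $\delta<\tfrac16$, not the looser $\delta<\tfrac{5}{18}$ your computation would suggest. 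With these two fixes your argument coincides with the paper's, including its use of a dyadic (equivalently Abel) summation together with \cref{db_sum} for the main-term tail. Your proposed fallback --- relaxing further to $m\mid 16(4a^3+27b^2)$ --- is a different (weaker, smaller-modulus) relaxation than the paper's $m\mid \Delta_E/N_E$ and is not needed.
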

\begin{proof}
  In the proof below, all instances of $\cF$ may be replaced by $\cFY$.
  
  Note that $-496H \leqslant \Delta_E \leqslant 64H$.
  \begin{align*}
    \frac{\#\!\left\{E \in \cF(H) \,:\, H^{-\delta} < \frac{N_E}{H},\,\, \M < \frac{|\Delta_E|}{N_E}\right\}}{\#\cF(H)}
    &= \sum_{\M \,<\, |\m| \,<\, 496 H^\delta} \frac{\#\left\{ E \in \cF(H) \,:\, \frac{\Delta_E}{N_E} = \m,\,\, H^{-\delta} < \frac{\Delta_E}{\m H} \right\}}{\#\cF(H)}\\
    &\leqslant \sum_{\M \,<\, |\m| \,<\, 496 H^\delta} \frac{\#\left\{ E \in \cF(H) \,:\, \m \mid \frac{\Delta_E}{N_E},\,\, H^{-\delta} < \frac{\Delta_E}{\m H} \right\}}{\#\cF(H)}.
  \end{align*}
  Define $Q_\m \coloneqq |\m| \prod_{p\mid \m} p^2$. By \cref{table:redp,table:red2_all,table:red3_all}, the condition $\m \mid \tfrac{\Delta_E}{N_E}$ is determined by a congruence mod $Q_\m$. Applying \cref{disc_dist,rad_comparison,FH_size}
  then gives
  \begin{align*}
    \frac{\#\!\left\{E \in \cF(H) \,:\, H^{-\delta} < \frac{N_E}{H},\,\, \M < \frac{|\Delta_E|}{N_E}\right\}}{\#\cF(H)}
    &\ll \sum_{\M \,<\, \m \,<\, 496 H^\delta} \left[ \frac{1}{\m}\prod_{p \mid \m} \frac{3}{p} + O\!\left(H^{-\frac{1}{3}} \prod_{p\mid \m} p + H^{-\frac{5}{6}} \m \prod_{p\mid \m} p^3\right) \right]\\
    &\ll \sum_{\M \,<\, \m \,<\, 496 H^\delta} \left[ \frac{1}{\m}\prod_{p \mid \m} \frac{3}{p} + O\!\left(H^{-\frac{1}{3}} \m + H^{-\frac{5}{6}} \m^4\right) \right]\\
    &\ll \sum_{\M \,<\, \m \,<\, 496 H^\delta} \left[ \frac{1}{\m}\prod_{p \mid \m} \frac{3}{p} \right] + O\!\left(H^{-\frac{1}{3} + 2\delta} + H^{-\frac{5}{6} + 5\delta}\right).
  \end{align*}

  Let $\xi > 1$ be a value to be chosen later. A $\xi$-adic decomposition of the sum above gives
  \begin{align*}
    \sum_{\M \,<\, \m \,<\, 496 H^\delta} \frac{1}{\m}\prod_{p \mid \m} \frac{3}{p}
    &\ll \sum_{0 \,<\, k \,<\, \log_\xi 496 H^\delta - \log_\xi \M} \;\;\sum_{\M \xi^k \,<\, \m \,<\, \M \xi^{k+1}} \frac{1}{\m}\prod_{p \mid \m} \frac{3}{p}\\
    &\ll \sum_{0 \,<\, k \,<\, \log_\xi 496 H^\delta - \log_\xi \M} \;\frac{1}{\M \xi^k}\;\sum_{\M \xi^k \,<\, \m \,<\, \M \xi^{k+1}} \prod_{p \mid \m} \frac{3}{p}\\
    &\ll \sum_{0 \,<\, k \,<\, \log_\xi 496 H^\delta - \log_\xi \M} \;\frac{1}{\M \xi^k} (\m \xi^{k+1})^\eps \hspace{2.5cm} (\text{using \cref{db_sum}})\\
    &\ll \frac{(\m \xi)^\eps}{\m} \sum_{0 \,<\, k \,<\, \log_\xi 496 H^\delta - \log_\xi \M} (\xi^{-1+\eps})^k\\
    &\ll \frac{(\m \xi)^\eps}{\m} \frac{1 - \left(\frac{496 H^\delta}{\m}\right)^{-1 + \eps}}{1 - \xi^{-1+\eps}}.
  \end{align*}
  If $\M > 496H^\delta$ then \cref{FN_bigm} holds because the quantity on the left hand side is $0$. Otherwise, taking e.g.\ $\xi = 2$ completes the proof.
\end{proof}

\begin{lemma}\label{N_56_bound}
  \begin{align*}
    \#\{ E \in \cF(H) \,:\, N_E < H^{\frac{5}{6}+\eps},\,\, |\Delta_E| > H^{1-\eps}\} \ll H^{\frac{19}{24} + \eps}
    ,
  \end{align*}
  and idem for $\cFY(H)$.
\end{lemma}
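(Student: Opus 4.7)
Let $E \in \cF(H)$ satisfy the hypotheses, and set $m_E := |\Delta_E|/N_E$. Then $m_E > H^{1/6 - 2\eps}$, and $|\Delta_E| = m_E N_E < m_E H^{5/6+\eps}$. I would decompose the count by dyadic ranges of $m_E$: for each $M = 2^k H^{1/6-2\eps}$ with $k \geq 0$, bound
\[
  \#\bigl\{E : m_E \in [M, 2M],\, N_E < H^{5/6+\eps}\bigr\} \leq \sum_{M \leq m < 2M}\#\bigl\{E \in \cF(H) : m \mid m_E,\, |\Delta_E| < 2m H^{5/6+\eps}\bigr\},
\]
observing that each $E$ on the left is counted at most twice on the right, because $m\mid m_0$ together with $m, m_0 \in [M, 2M]$ forces $m_0/m \in \{1,2\}$.

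To each inner count I would apply \cref{disc_dist} with modulus $Q_m := m\prod_{p\mid m}p^2$ (the minimality hypothesis being verified via \cref{d4d6_by_type}), density bound $\#S_{Q_m}/Q_m^2 \ll \tfrac{1}{m}\prod_{p\mid m}\tfrac{3}{p}$ from \cref{rad_comparison}, and $\lambda_1 = 2m H^{-1/6+\eps}$, $\lambda_0 = -\lambda_1$. Combining \cref{FDelta_bound} and \cref{db_sum}, the main-term contribution from a dyadic range is $\ll H^{25/36+\eps}M^{-1/6+\eps}$ when $M \leq H^{1/6}$ and $\ll H^{5/6}M^{-1+\eps}$ thereafter; both are geometric in $M$, and their total sum is $\ll H^{2/3+\eps}$, well within the target.

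The main obstacle is the error term from \cref{disc_dist}, which contributes $O\bigl(H^{1/2}\operatorname{rad}(m)^{1+\eps} + m\operatorname{rad}(m)^{3+\eps}\bigr)$ per $m$. A naive summation over $m \in [M, 2M]$ gives $H^{1/2}M^{2+\eps} + M^{5+\eps}$, exceeding $H^{19/24}$ already at $M \sim H^{7/48}$. To match the stated bound $H^{19/24+\eps} = H^{1/2 + 7/24 + \eps}$, I would truncate the \cref{disc_dist}-based analysis at $M_0 = H^{7/48}$ (so that both $H^{1/2}M_0^2$ and $M_0^4$ stay below $H^{19/24}$) and handle the complementary regime $m > M_0$ by a different estimate, most plausibly by combining the inclusion $m_E \mid \Delta_E$ with a uniform bound on the number of integer solutions to $4a^3 + 27b^2 = k$ and with the constraint $N_E < H^{5/6+\eps}$ to control the overcounting by divisors. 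Balancing these two sources of loss against each other to obtain the exponent $19/24$ is the technical crux of the argument.
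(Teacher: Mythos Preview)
Your plan has a genuine gap, and in fact the \cref{disc_dist}-based portion of the argument contributes nothing. You correctly observe that the relevant range is $m_E > H^{1/6-2\eps}$, but your proposed truncation point $M_0 = H^{7/48}$ lies \emph{below} this threshold (since $7/48 < 1/6$). Thus the range you propose to handle via \cref{disc_dist} is empty, and the entire count falls into your ``complementary regime'' where you offer only a vague sketch. Even at the very bottom of the actual range, $M = H^{1/6-2\eps}$, the summed error term $H^{1/2}\sum_{m\sim M}\operatorname{rad}(m) \asymp H^{1/2}M^2 \asymp H^{5/6-4\eps}$ already matches $\#\cF(H)$, so the lattice-point method gives no information here. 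Your suggestion of counting integer points on $4a^3+27b^2=k$ would require uniform bounds on Mordell-type equations that are themselves highly nontrivial, and you give no indication how to make this quantitative.

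The paper's proof takes a completely different route: it passes to quadratic-twist-minimal curves $\cF(H)^{\mathrm{min}}$, observing that each $E$ in the set of interest is a twist $E_0^{(d)}$ of some $E_0\in\cF(H)^{\mathrm{min}}$ with only $O(H^\eps)$ admissible values of $d$, and that the constraints force $\sqrt{|\Delta_{E_0}|/N_{E_0}} > N_{E_0}/H^{3/4+\eps}$. The count of such $E_0$ is then split at $N_{E_0} = H^{19/24}$: the small-conductor piece is bounded by Duke--Kowalski's $\ll X^{1+\eps}$ estimate, and the large-conductor piece by \cite[Thm.~5.1]{SSW}, which controls minimal curves with $Q(f) \gg \sqrt{|\Delta_E|/N_E}$ large. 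These two external inputs are what produce the exponent $19/24$; neither is a consequence of lattice-point counting, and your plan does not invoke either.
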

\begin{proof}
  It is sufficient to prove the claim for $\cFQ$, since $\cFY(H) \subseteq \cFQ(H)$ for all $H$.
  
  Let $\cFQ(H)^{\mathrm{min}} \coloneqq \{E_{a,b} \in \cF(H) \,:\, p^2 \mid a \Rightarrow p^3 \nmid b\}$ be the set of quadratic-twist-minimal curves of $\cFQ(H)$. The quadratic twist $E^{(d)}$ of the elliptic curve $E$ by $d$ has $$N_{E^{(d)}} = d^2 N_E \quad\quad\text{and}\quad\quad \Delta_{E^{(d)}} = d^6 \Delta_E.$$ If $E$ is quadratic-twist-minimal and $d$ is such that $N_{E^{(d)}} < H^{\frac{5}{6}+\eps}$ and $H^{1-\eps} < |\Delta_{E^{(d)}}| < 496H$, this imposes the constraints
  \begin{align*}
    d < \frac{H^{\frac{5}{12}+\eps}}{N_E^\half} \quad\quad\text{and}\quad\quad \frac{H^{\frac{1}{6}-\eps}}{N_E^{\frac{1}{6}} \sqrt{\frac{|\Delta_E|}{N_E}}^{\frac{1}{3}}} < d < \frac{496^{\frac{1}{6}}H^{\frac{1}{6}}}{N_E^{\frac{1}{6}}\sqrt{\frac{|\Delta_E|}{N_E}}^{\frac{1}{3}}}.
  \end{align*}
  These imply that the number of integers $d$ satisfying these constraints is $\ll H^\eps$, and that $\sqrt{\frac{|\Delta_E|}{N_E}} > \frac{N_E}{H^{\frac{3}{4}+\eps}}.$
  Thus,
  \begin{align*}
    \#\{ E \in \cFQ(H) \,:\, N_E < H^{\frac{5}{6}+\eps},\,\, |\Delta_E| > H^{1-\eps}\}
    \ll H^\eps \#\!\left\{E \in \cFQ(H)^{\mathrm{min}} \,:\, \sqrt{\frac{|\Delta_E|}{N_E}} > \frac{N_E}{H^{\frac{3}{4}+\eps}}\right\}.
  \end{align*}
  By \cite[Prop.\ 1]{duke_kowalski},
  \begin{align*}
    \#\!\left\{E \in \cFQ(H)^{\mathrm{min}} \,:\, \sqrt{\frac{|\Delta_E|}{N_E}} > \frac{N_E}{H^{\frac{3}{4}+\eps}},\,\, N_E \leqslant H^{\frac{19}{24}}\right\} \ll H^{\frac{19}{24}+\eps}.
  \end{align*}
  Note from \cite[Table 1]{SSW} that the quantity $Q(f)$ appearing in \cite[Thm.\ 5.1]{SSW} is $\gg \sqrt{\frac{|\Delta_E|}{N_E}}$. Thus,
  \begin{align*}
    \#\!\left\{E \in \cFQ(H)^{\mathrm{min}} \,:\, \sqrt{\frac{|\Delta_E|}{N_E}} > \frac{N_E}{H^{\frac{3}{4}+\eps}},\,\, N_E > H^{\frac{19}{24}}\right\}
    &< \#\!\left\{E \in \cFQ(H)^{\mathrm{min}} \,:\, \sqrt{\frac{|\Delta_E|}{N_E}} > H^{\frac{1}{24}}\right\}\\
    &\ll H^{\frac{19}{24}+\eps}.\qedhere
  \end{align*}
\end{proof}


\section{Proofs of the main theorems}\label{sec:proofs}
\begin{proof}[\hypertarget{proof:thm1}{Proof of \cref{thm:conductor_distribution_FN}}]
We will prove that, for any positive real numbers $\lambda_0 < \lambda_1$ and any $\q \geqslant \tfrac{496}{\lambda_0}$,
\begin{align}
  &\frac{\#\!\left\{E \in \cF(H) \,:\, \lambda_0 < \frac{N_E}{H} < \lambda_1\right\}}{\#\cF(H)}\nonumber\\
  &\hspace{3cm}= \prod_{\phantom{{}_*}p < \q}\frac{1 - p^{-2}}{1 - p^{-10}} \cdot \sum_{\m \neq 0} \left(\big|F_\Delta(\m\lambda_1) - F_\Delta(\m\lambda_0)\big| \prod_p \frac{\rho(p,\m)}{1 - p^{-2}}\right) \label{eq:conductor_distribution}\\
  &\hspace{3.5cm}+ O\!\left(\q H^{-\frac{1}{3}+\frac{3q}{\log H}} + \q^{-1+\eps}\right).\nonumber 
\end{align}
\Cref{thm:conductor_distribution_FN} for the family $\cFQ(H)$ will then follow from \cref{euler_prod_approx} and setting $\q = (\tfrac{1}{9} - \eps)\log H$ in the above. The proof of \cref{thm:conductor_distribution_FN} for the family $\cFY(H)$ is the same, mutatis mutandis.

For every fixed $\lambda_0$, the sum over $\m$ in the right hand side above contains only finitely many nonzero terms: $-496 < \m\lambda_0 < 64$. Moreover, because $\rho(p,\m) = 1 - p^{-2}$ when $p \nmid \m$ and $p \neq 2$ (see \cref{rho_def}), the product over $p$ in \eqref{eq:conductor_distribution} is also finite.
  
Fix positive real numbers $\lambda_0 < \lambda_1$. Then
\begin{align*}
  \#\left\{ E \in \cF(H) \,:\, \lambda_0 < \frac{N_E}{H} < \lambda_1 \right\}
  = \sum_{\m \,\in\, \Z_{\neq 0}} \#\left\{ E \in \cF(H) \,:\, \frac{\Delta_E}{N_E} = \m,\,\, \lambda_0 < \frac{\Delta_E}{\m H} < \lambda_1 \right\}.
\end{align*}
Since $N_E \geqslant 1$ and $-496H < \Delta_E < 64H$, only finitely many terms in the sum above will be nonzero:
\begin{align*}
  \#\left\{ E \in \cF(H) \,:\, \lambda_0 < \frac{N_E}{H} < \lambda_1 \right\}
  = \sum_{\substack{-\frac{496}{\lambda_0} \,<\, \m \,<\, \frac{64}{\lambda_0} \\ \m \,\neq\, 0}} \#\left\{ E \in \cF(H) \,:\, \frac{\Delta_E}{N_E} = \m,\,\, \lambda_0 < \frac{\Delta_E}{\m H} < \lambda_1 \right\}.
\end{align*}

Let $\q \geqslant \tfrac{496}{\lambda_0}$, and let $\QD$, $C$, and $S_{\QD,\m}$ for $\m \mid C$ be as in \cref{def:QC}. 

Every $(a,b)$ such that $E_{a,b} \in \cF(H)$ automatically satisfies 
\eqref{eq:d4d6_minimal} (in the case of the family $\cFY(H)$, both \eqref{eq:d4d6_minimal} and \eqref{eq:d4d6_ab}). If $0 < \m < \q$ and $(a\mod \QD,\, b\mod \QD) \in S_{\QD,\m}$ and $\frac{|\Delta_E|}{N_E} \neq \m$, then by construction
$\frac{|\Delta_E|}{N_E} > \q$. 
Also by construction, if $(a\mod \QD,\, b\mod \QD) \in S_{\QD,\m}$, then $(a\mod \QD,\, b\mod \QD) \not\in S_{\QD,\m'}$ for any $0 < \m' < \q$, $\m' \neq \m$. Thus,
\begin{align}
  \nonumber
  &\sum_{\substack{-\frac{496}{\lambda_0} \,<\, \m \,<\, \frac{64}{\lambda_0} \\ \m \,\neq\, 0}} \#\left\{ E \in \cF(H) \,:\, \frac{\Delta_E}{N_E} = \m,\,\, \lambda_0 < \frac{\Delta_E}{\m H} < \lambda_1 \right\}\\
  \label{eq:SQm_decomp}
  &\hspace{2cm}= \sum_{\substack{-\frac{496}{\lambda_0} \,<\, \m \,<\, \frac{64}{\lambda_0} \\ \m \,\neq\, 0}} \#\left\{ E_{a,b} \in \cF(H) \,:\, (a\mod \QD,\, b\mod \QD) \in S_{\QD,\m},\,\, \lambda_0 < \frac{\Delta_E}{\m H} < \lambda_1 \right\}\\
  \nonumber
  &\hspace{2.5cm}- O\!\left(\#\left\{ E \in \cF(H) \,:\, \q < \frac{|\Delta_E|}{N_E},\,\, \lambda_0 < \frac{|\Delta_E|}{H}\right\}\right).
\end{align}
By \cref{FN_bigm},
$$\#\left\{ E \in \cF(H) \,:\, \q < \frac{|\Delta_E|}{N_E} < H^{\frac{1}{6} - \eps},\,\, \lambda_0 < \frac{|\Delta_E|}{H} \right\} \ll \q^{-1+\eps}\#\cF(H).$$
By \cref{N_56_bound},
$$\#\left\{ E \in \cF(H) \,:\, H^{\frac{1}{6} - \eps} < \frac{|\Delta_E|}{N_E} ,\,\, \lambda_0 < \frac{|\Delta_E|}{H} \right\} \ll H^{\frac{19}{24} + \eps}.$$

Comparing \cref{rho_def} to \cref{table:redp,table:red2_all,table:red3_all}, we see that
\begin{align}
  \frac{\#S_{\QD,\m}}{\QD^2} = \prod_{\phantom{{}_*}p \,<\, \q} \rho(p,\m).
  \label{eq:SQm_prod}
\end{align}
Note that for the family $\cFY(H)$, one acquires an extra factor of $\tfrac{1}{36}$ on the right, coming from the requirement that $a = r \ \mathrm{mod}\ 6$ and $b = t \ \mathrm{mod}\ 6$.

From the prime number theorem \cite[(6.12)]{MV},
$$\QD \asymp \exp\!\left(3\q + O\!\left(\frac{\q}{\exp(c\sqrt{\log \q})}\right)\right)$$
for some $c > 0$ as $\q \to \infty$. By \cref{d4d6_by_type} we may apply \cref{disc_dist} to \eqref{eq:SQm_decomp}, deducing
\begin{align*}
  &\#\left\{ E \in \cF(H) \,:\, \lambda_0 < \frac{N_E}{H} < \lambda_1 \right\}\\
  &\hspace{2cm}= \sum_{\substack{-\frac{496}{\lambda_0} \,<\, \m \,<\, \frac{64}{\lambda_0} \\ \m \,\neq\, 0}} \left(\frac{4H^{\frac{5}{6}}}{\zeta^{(\QD)}(10)} \big|F_\Delta(\m\lambda_1) - F_\Delta(\m\lambda_0)\big|\prod_{\phantom{{}_*}p \,<\, \q} \rho(p,\m) + O\!\left(H^{\half+\frac{3q}{\log H}} \right)\right)\\
  &\hspace{2.5cm}+ O\!\left(\q^{-1+\eps} \#\cF(H) + H^{\frac{19}{24} + \eps}\right)
\end{align*}
(the coefficient $4$ becomes $\tfrac{1}{9}$ for $\cFY$). \Cref{FH_size} and some elementary manipulations then yield \eqref{eq:conductor_distribution}.
\end{proof}

\begin{proof}[\hypertarget{proof:thm2}{Proof of \cref{thm:small_conductors}}]
  We prove \cref{thm:small_conductors} for the family $\cF$ only. The proof for $\cFY$ is very similar; we point out the most important differences as we go.
  
  Let $\QD$, $C$, and $S_{\QD,\m}$ be as as in \cref{def:QC}. These are functions of $\q$.
  Observe that, by construction,
  \begin{align*}
    \sum_{\m\mid C} \# S_{\QD,\m} &= \#\{ (a,b) \in (\Z/\QD\Z)^2 \,: \text{$(a,b)$ satisfies 
      \eqref{eq:d4d6_minimal}} \}
  \end{align*}
  (both \eqref{eq:d4d6_minimal} and \eqref{eq:d4d6_ab} for the family $\cFY$), thus
  \begin{align}
    \lim_{\q\to\infty} \frac{1}{\QD^2}\sum_{\m\mid C} \# S_{\QD,\m} = \prod_p 1 - p^{-10}\label{SQ_sum_size}
  \end{align}
  (with a factor of $\tfrac{1}{36}$ on the right for the family $\cFY$, and the product over $p \geqslant 5$).
  
  Combining \eqref{SQ_sum_size} and \eqref{eq:SQm_prod},
  \begin{align}
    \sum_{\m=1}^\infty \prod_p \rho(p,\m) = \frac{1}{\zeta(10)}
    \label{rho_sum}
  \end{align}
  ($\zeta^{(6)}(10)^{-1}$ for $\cFY$).  
  We can now compute
  \begin{align}
    &\frac{\zeta(10)}{\zeta(2)} \sum_{\m = 1}^\infty \big(F_\Delta(\infty) - F_\Delta(\m\lambda) + F_\Delta(-\m\lambda) - F_\Delta(-\infty)\big)\prod_p \frac{\rho(p,\m)}{1 - p^{-2}}&& \nonumber\\
    &\hspace{2cm}= \zeta(10) \sum_{\m = 1}^\infty \big(1 - (F_\Delta(\m\lambda) - F_\Delta(-\m\lambda))\big)\prod_p \rho(p,\m)&& \nonumber\\
    &\hspace{2cm}= 1 - \zeta(10) \sum_{\m = 1}^\infty \big(F_\Delta(\m\lambda) - F_\Delta(-\m\lambda)\big)\prod_p \rho(p,\m) &&\text{(using \eqref{rho_sum})} \nonumber\\
    &\hspace{2cm}= 1 - O(\lambda^{\frac{5}{6}}) &&\text{(using \cref{FN_small_bound})}. \nonumber
  \end{align}
  Using \cref{thm:conductor_distribution_FN},
  \begin{align*}
    &\frac{\#\!\left\{ E \in \cF(H) \,:\, N_E < \lambda H \right\}}{\#\cF(H)}\\
    &\hspace{1cm} = 1 - \frac{\#\!\left\{ E \in \cF(H) \,:\, N_E \geqslant \lambda H \right\}}{\#\cF(H)}\\
    &\hspace{1cm} = 1 - \frac{\zeta(10)}{\zeta(2)} \sum_{\m = 1}^\infty \big(F_\Delta(\infty) - F_\Delta(\m\lambda) + F_\Delta(-\m\lambda) - F_\Delta(-\infty)\big)\prod_p \frac{\rho(p,\m)}{1 - p^{-2}}\\
    &\hspace{1.5cm}+ O( (\log H)^{-1+\eps})\\
    &\hspace{1cm} = 1 - (1 - O(\lambda^{\frac{5}{6}})) + O( (\log H)^{-1+\eps}) \\
    &\hspace{1cm} \ll \lambda^{\frac{5}{6}}. \hspace{7.75cm}\text{(because $\lambda \gg (\log H)^{-1}$).}
  \end{align*}
  
   Using \cref{FH_size} and the fact that $\lambda \gg (\log H)^{-1}$,
  \begin{align*}
    \#\{E \in \cF(H) \,:\, N_E < \lambda H\} \geqslant \#\cF(\lambda H) \gg \lambda^{\frac{5}{6}} \#\cF(H),
  \end{align*}
  proving the theorem's lower bound.
\end{proof}

\Cref{fig:thm1}'s plots were computed numerically using the identity
\begin{align}
  \label{eq:fig_identity}
  \begin{aligned}
  &\frac{\zeta(10)}{\zeta(2)} \sum_{\m = 1}^\infty \big(F_\Delta(\m\lambda_1) - F_\Delta(-\m\lambda_1)\big)\prod_p \frac{\rho(p,\m)}{1 - p^{-2}}
  \\&
  \hspace{5cm}
  = 1 + \frac{\zeta(10)}{\zeta(2)} \sum_{1 \leqslant \m \leqslant \frac{496}{\lambda_0}} \big(F_\Delta(\m\lambda_1) - F_\Delta(-\m\lambda_1) - 1\big)\prod_p \frac{\rho(p,\m)}{1 - p^{-2}}
  ,
  \end{aligned}
\end{align}
valid for any $\lambda_0 \leqslant \lambda_1$ (choosing $\lambda_0 \approx \lambda_1$ is most convenient). This identity follows from \eqref{rho_sum}.

\begin{proof}[Proof of \cref{thm:tail_estimate}]
  Once again, we prove the theorem for the family $\cF(H)$ only, as the proof for $\cFY(H)$ is virtually identical.
  
  For $0 < \delta < \tfrac{1}{6}$ and $\lambda > 0$, define
  \begin{align*}
    &\cF(H)_1 \coloneqq \!\left\{E \in \cF(H) \,:\, N_E < \lambda H,\,\, \frac{|\Delta_E|}{N_E} < \M\right\},\\
    &\cF(H)_2 \coloneqq \!\left\{E \in \cF(H) \,:\, H^{1 - \delta} < N_E < \lambda H,\,\, \frac{|\Delta_E|}{N_E} > \M\right\},\\
    &\cF(H)_3 \coloneqq \!\left\{E \in \cF(H) \,:\, N_E < H^{1 - \delta},\,\, \frac{|\Delta_E|}{N_E} > \M\right\},
  \end{align*}
  where $\M \in \R_{>0}$ will be chosen later.

  Consider $\cF(H)_1$.
  \begin{align*}
    &\#\cF(H)_1 \leqslant \#\!\left\{E \in \cF(H) \,:\, |\Delta_E| < \lambda \M H\right\}.
  \end{align*}
  We apply \cref{disc_dist} with $\QM = 1$, (for the family $\cFY$, instead $\QM = 6$ and $S_\QM$ corresponding to only the condition that $E \in \cFY(H)$), yielding
  \begin{align}\label{F1_bound}
    \#\cF(H)_1 &\ll H^{\frac{5}{6}} (F_\Delta(\lambda \M) - F_\Delta(-\lambda \M)) + H^\half
    \ll H^{\frac{5}{6}} (\lambda \M)^{\frac{5}{6}} + H^\half.
  \end{align}
  The second bound above employs \cref{FDelta_bound}.

  Consider $\cF(H)_2$. By \cref{FN_bigm} and \cref{FH_size},
  \begin{align}\label{F2_bound}
    \#\cF(H)_2 \ll  H^{\frac{5}{6}} \M^{-1+\eps}.
  \end{align}
  As $\delta$ does not feature in \eqref{F2_bound}, we take $\delta = \tfrac{1}{6} - \eps$.

  Consider $\cF(H)_3$. Via the same reasoning as was used in the proof of \cref{N_56_bound},
  \begin{align}
    \#\cF(H)_3
    &\ll \sum_{\substack{E \,\in\, \cF(H)^{\mathrm{min}} \\ N_E \,<\, H^{\frac{5}{6}}\M^{-\frac{2}{7}}}} \left(\frac{H}{N_E}\right)^{\frac{1}{6}} 
    + \sum_{\substack{E \,\in\, \cF(H)^{\mathrm{min}} \\ N_E \,>\, H^{\frac{5}{6}}\M^{-\frac{2}{7}} \\ \frac{|\Delta_E|}{N_E} \,>\, \M N_E^2H^{-\frac{5}{3}}}} \left(\frac{H}{\Delta_E}\right)^{\frac{1}{6}} \nonumber\\
    &\ll H^{\frac{31}{36} + \eps} \M^{-\frac{5}{21}}. \label{F3_bound}
  \end{align}

  Combining \eqref{F1_bound}, \eqref{F2_bound}, and \eqref{F3_bound},
  \begin{align*}
    &\#\!\left\{E \in \cF(H) \,:\, N_E < \lambda H\right\}
    \ll \max\!\left\{ (H\lambda \M)^{\frac{5}{6}} + H^{\frac{1}{2}},\,\, \M^{-1+\eps},\,\, H^{\frac{31}{36} + \eps} \M^{-\frac{5}{21}} \right\}.
  \end{align*}
  Defining $\x \coloneqq \lambda H$ and setting
  \begin{align*}
    \M = H^{\frac{7}{270}}\left(\frac{H}{\x}\right)^{\!\frac{7}{9}}
  \end{align*}
  yields the upper bound of \cref{thm:tail_estimate}. The lower bound follows from comparison with $\#\cF(\x)$, like in the \hyperlink{proof:thm2}{proof of \cref*{thm:small_conductors}}.
\end{proof}

\renewcommand{\bibliofont}{\normalfont\small} 
\bibliographystyle{amsalpha}
\bibliography{murmurationsbib}{}

\providecommand{\bysame}{\leavevmode\hbox to3em{\hrulefill}\thinspace}
\providecommand{\MR}{\relax\ifhmode\unskip\space\fi MR }
\providecommand{\MRhref}[2]{%
  \href{http://www.ams.org/mathscinet-getitem?mr=#1}{#2}
}
\providecommand{\href}[2]{#2}
\begin{thebibliography}{Cow24b}

\bibitem[BM90]{bm}
Armand Brumer and Ois\'{\i}n McGuinness, \emph{The behavior of the
  {M}ordell-{W}eil group of elliptic curves}, Bull. Amer. Math. Soc. (N.S.)
  \textbf{23} (1990), no.~2, 375--382. \MR{1044170}

\bibitem[Cow24a]{github_conductors}
Alex Cowan, \emph{{Conductor distribution code}},
  \url{https://github.com/thealexcowan/conductordistribution}, 2024.

\bibitem[Cow24b]{ratiosconjecture}
\bysame, \emph{Murmurations and ratios conjectures}, arXiv
  \href{https://arxiv.org/abs/2408.12723}{2408.12723}, 2024.

\bibitem[CS23]{cremona_sadek}
John~E. Cremona and Mohammad Sadek, \emph{Local and global densities for
  {W}eierstrass models of elliptic curves}, Math. Res. Lett. \textbf{30}
  (2023), no.~2, 413--461. \MR{4649636}

\bibitem[DHP15]{DHP}
Chantal David, Duc~Khiem Huynh, and James Parks, \emph{One-level density of
  families of elliptic curves and the {R}atios {C}onjecture}, Res. Number
  Theory \textbf{1} (2015), Paper No. 6, 37. \MR{3500990}

\bibitem[DK00]{duke_kowalski}
W.~Duke and E.~Kowalski, \emph{A problem of {L}innik for elliptic curves and
  mean-value estimates for automorphic representations}, Invent. Math.
  \textbf{139} (2000), no.~1, 1--39, With an appendix by Dinakar Ramakrishnan.
  \MR{1728875}

\bibitem[MV07]{MV}
Hugh~L. Montgomery and Robert~C. Vaughan, \emph{Multiplicative number theory.
  {I}. {C}lassical theory}, Cambridge Studies in Advanced Mathematics, vol.~97,
  Cambridge University Press, Cambridge, 2007. \MR{2378655}

\bibitem[Pap93]{papadopoulos}
Ioannis Papadopoulos, \emph{Sur la classification de {N}\'eron des courbes
  elliptiques en caract\'eristique r\'esiduelle {$2$} et {$3$}}, J. Number
  Theory \textbf{44} (1993), no.~2, 119--152. \MR{1225948}

\bibitem[Sil94]{silverman}
Joseph~H. Silverman, \emph{Advanced topics in the arithmetic of elliptic
  curves}, Graduate Texts in Mathematics, vol. 151, Springer-Verlag, New York,
  1994. \MR{1312368}

\bibitem[SSW21]{SSW}
Ananth~N. Shankar, Arul Shankar, and Xiaoheng Wang, \emph{Large families of
  elliptic curves ordered by conductor}, Compos. Math. \textbf{157} (2021),
  no.~7, 1538--1583. \MR{4277110}

\bibitem[Wat08]{watkins}
Mark Watkins, \emph{Some heuristics about elliptic curves}, Experiment. Math.
  \textbf{17} (2008), no.~1, 105--125. \MR{2410120}

\bibitem[You10]{young:elliptic_curves}
Matthew~P. Young, \emph{Moments of the critical values of families of elliptic
  curves, with applications}, Canad. J. Math. \textbf{62} (2010), no.~5,
  1155--1181. \MR{2730361}

\end{thebibliography}

\end{document}